\documentclass[reqno,12pt,a4paper]{amsart}

\textheight=9.7truein \hoffset=-0.9truein
\textwidth=6.7truein
\voffset=-0.9truein

\usepackage{amsmath,amsthm,amssymb,amsfonts,dsfont,ifpdf,mathtools,verbatim}
\usepackage{enumitem}
\usepackage{graphicx}
\usepackage[usenames,dvipsnames]{color}
\usepackage[all]{xy}

\usepackage[colorlinks=true, linkcolor=blue, urlcolor=blue, citecolor=ForestGreen]{hyperref}
\numberwithin{equation}{section}

\setcounter{tocdepth}{1}

\newtheorem {thm}    {Theorem}[section]
\newtheorem {lem}      [thm]    {Lemma}

\newtheorem {cor}  [thm]    {Corollary}
\newtheorem {prop}[thm]    {Proposition}
\newtheorem* {prop*} {Proposition}

\newtheorem*{claim*}   {Claim}

\newtheorem*{conj*} {Conjecture}

\theoremstyle{definition}
\newtheorem {defn} [thm]    {Definition}
\newtheorem {ex}    [thm]    {Example}
\newtheorem {rmk}    [thm]    {Remark}
\newtheorem*{rmk*}  {Remark}

\newtheorem*{qst*} {Question}

\newtheorem* {problem*}{Problem}

\newcounter{AbcT}

\numberwithin{equation}{section}

\newcommand {\supl}   {\sup\limits}
\newcommand {\infl}   {\inf\limits}
\newcommand {\liml}   {\lim\limits}
\newcommand {\limsupl}   {\limsup\limits}

\newcommand {\E} {{\mathbb E}}

\newcommand {\N} {{\mathbb N}}
\newcommand {\Q} {{\mathbb Q}}

\renewcommand {\P}  {{\mathbb P}}
\newcommand {\R} {{\mathbb R}}

\newcommand {\Z} {{\mathbb Z}}

\newcommand {\cB} {{\mathcal B}}

\newcommand {\cF} {{\mathcal F}}

\newcommand {\cT} {{\mathcal T}}

\DeclareMathOperator{\supp}{supp}

\DeclareMathOperator{\SL}{SL}
\DeclareMathOperator{\PSL}{PSL}

\newcommand{\eps}{\varepsilon}

\newcommand {\IGNORE}[1]  {}

\newcommand {\norm}[1] {\left\| {#1} \right\|}



\newcommand {\La} {{\Lambda}}
\newcommand{\Ga}{\Gamma}



\newcommand{\lb}{\liminf\limits_{n\to\infty}\frac{1}{n}\log{\mu_n}}
\newcommand{\ub}{\limsup\limits_{n\to\infty}\frac{1}{n}\log{\mu_n}}


\begin{document}

	\title[Large deviations for random walks on free products]{Large deviations for random walks on free products of finitely generated groups}
	\author[E.~Corso]{Emilio Corso}
	\address[E. C.]{ETH Z\"urich, R\"amistrasse 101
		CH-8092 Z\"urich
		Switzerland}
	\email{emilio.corso@math.ethz.ch}
	\date{\today}
	\keywords{Large deviations, random walks, free groups, free products, Gromov-hyperbolic groups, cone types, regular trees}
	
	\subjclass[2010]{60B15, 60F10, 60G50, 05C81}
	
	\begin{abstract}
		We prove existence of the large deviation principle, with a proper convex rate function, for the distribution of the renormalized distance from the origin of a random walk on a free product of finitely generated groups. As a consequence, we derive the same principle for nearest-neighbour random walks on regular trees.
	\end{abstract}
	\maketitle
	
	\tableofcontents

	\section{Introduction and main result}
	The study of random walks on algebraic and geometric structures, most notably graphs and groups, has attracted considerable attention over the last four decades. Initiated by Polya's celebrated results on recurrence and transience of symmetric simple random walks on integer lattices (\cite{Polya}), the subject rose to prominence in the sixties, starting with Kesten's foundational work in the context of groups (\cite{Kesten}). It was later repopularised, mainly owing to pioneering contributions due to Kaimanovich, R.~Lyons, Varopoulos, Vershik, to name but a few; several directions of investigation gradually emerged, alongside new connections with various branches of pure and applied mathematics. For further details, we refer the reader to Woess' monograph~\cite{Woess} and the extensive bibliography therein.
	
	In this article, we confine ourselves to the study of random walks on a class of finitely generated groups, and specifically to the investigation of the asymptotic properties of the distribution of the renormalized distance from the origin. Prior to stating our main result, we provide a brief overview of the context within which it can be inscribed. 
	
	Let $G$ be a finitely generated group, endowed with the discrete topology, and $\mu$ a probability measure on $G$. The measure $\mu$ defines a right random walk $(Y_{n})_{n\in \N}$ started at $Y_0=e$, the identity element of $G$, given by $Y_n=X_1\cdots X_n$ for every $n\geq1$, where the $X_n$'s are independent $G$-valued random variables identically distributed according to $\mu$ (see Section~\ref{preliminaries} for precise definitions). Select a subset $S\subset G$ generating the group $G$. It determines a length function $\ell$ on $G$, measuring the size of its elements with respect to $S$; more precisely, for every $g\in G$, $\ell(g)$ is the minimal number of elements from the set $S\cup S^{-1}$ which are needed to obtain $g$ by multiplying them together. This corresponds to the path distance from the identity on the Cayley graph of $G$ with respect to the generating set $S$. To simplify the discussion, and in accordance with the cases of utmost interest, we shall always assume that $S$ is finite, though this is not necessary for the validity of Theorem~\ref{main}, which represents the main contribution of the article.
	
	The following well-known result provides an analogue, in a possibly non-commutative setting, of the strong law of large numbers for sums of independent real random variables.
	
	\begin{thm}
		\label{SLLN}
		Assume that $\mu$ has finite first moment with respect to the length function $\ell$, that is $\int_{G}\ell(g)\;d\mu(g)<\infty$. Then, there exists a non-negative real number $\lambda$ such that 
		\begin{equation*}
		\lim\limits_{n\to\infty}\frac{1}{n}\;\ell(Y_n)=\lambda\quad \P\emph{-almost surely.}
		\end{equation*}
	\end{thm}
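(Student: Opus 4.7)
The plan is to deduce the statement from Kingman's subadditive ergodic theorem applied to the natural cocycle associated with the random walk. The key observation is that the word length function $\ell$ is subadditive: since any expression of $g$ and of $h$ as products of elements of $S\cup S^{-1}$ can be concatenated into an expression of $gh$, one has $\ell(gh)\leq \ell(g)+\ell(h)$ for every $g,h\in G$.

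Concretely, I would realize the random walk on the canonical probability space $(\Omega,\mathcal{F},\P)=(G^{\N},\mathcal{B}(G)^{\otimes \N},\mu^{\otimes \N})$, with $X_n(\omega)=\omega_n$ for $\omega=(\omega_n)_{n\geq 1}$, and let $T\colon \Omega\to\Omega$ be the Bernoulli shift $(T\omega)_n=\omega_{n+1}$. Define $F_n(\omega)=\ell(Y_n(\omega))=\ell(X_1(\omega)\cdots X_n(\omega))$. For any $m,n\geq 1$,
\begin{equation*}
F_{m+n}(\omega)=\ell\bigl(X_1(\omega)\cdots X_m(\omega)\cdot X_{m+1}(\omega)\cdots X_{m+n}(\omega)\bigr)\leq F_m(\omega)+F_n(T^m\omega),
\end{equation*}
by subadditivity of $\ell$ and the fact that $X_{m+k}(\omega)=X_k(T^m\omega)$. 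Thus $(F_n)_{n\geq 1}$ is a subadditive cocycle over the measure-preserving system $(\Omega,\P,T)$, which is moreover ergodic because $T$ is a Bernoulli shift.

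Non-negativity of $\ell$ gives $F_n\geq 0$, and the finite first moment hypothesis gives $\E[F_1]=\int_{G}\ell(g)\,d\mu(g)<\infty$, hence in particular $\E[F_1^+]<\infty$. Kingman's subadditive ergodic theorem then yields the existence of a constant
\begin{equation*}
\lambda=\inf_{n\geq 1}\frac{1}{n}\E[F_n]=\lim_{n\to\infty}\frac{1}{n}\E[F_n]\in[0,\infty)
\end{equation*}
such that $\frac{1}{n}F_n\to\lambda$ both $\P$-almost surely and in $L^1(\P)$. Non-negativity of $\lambda$ follows from $F_n\geq 0$. This is precisely the claimed statement.

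The only substantive ingredient beyond Kingman's theorem is the subadditivity estimate, which is immediate from the triangle inequality on the Cayley graph; integrability of $F_1$ uses the first-moment assumption in an essential way, and this is where finiteness of $S$ plays no role, consistent with the remark preceding the theorem.
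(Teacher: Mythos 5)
Your proof is correct and is exactly the argument the paper has in mind: the paper itself states that Theorem~\ref{SLLN} is a consequence of Kingman's subadditive ergodic theorem and refers to Guivarc'h for the details, and your write-up supplies precisely that Kingman argument, with the subadditive cocycle $F_n=\ell(Y_n)$ over the Bernoulli shift, the integrability check from the first-moment hypothesis, and ergodicity of the shift giving a constant limit.
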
     
	
	Theorem~\ref{SLLN} is a consequence of Kingman's subadditive ergodic theorem (\cite{Kingman}); for a proof, we refer to the original article of Guivarc'h~\cite{Guivarch}.
	
	The constant $\lambda$ appearing in Theorem~\ref{SLLN} is called the \emph{escape rate} (or \emph{speed}) of the random walk; it clearly depends on $\mu$ and on the length function $\ell$. 
	
	Once almost-sure convergence of the sequence $\bigl(\frac{1}{n}\ell(Y_n)\bigr)_{n\geq 1}$ is established, it is natural to enquire about the asymptotic behaviour of the deviations from the mean $\ell(Y_n)-n\lambda$. In this spirit, a central limit theorem was first established in~\cite{Sawyer-Steger} for the case of free groups; a second, more geometric proof of the same result was later provided by Ledrappier in~\cite{Ledrappier}. Subsequently, Bjorklund (\cite{Bjorklund}) transposed Ledrappier's argument to the setting of Gromov-hyperbolic groups (cf.~\cite{Gromov, Ghys-deLaHarpe}), proving a central limit theorem for the Green metric on the group $G$. The rationale behind the introduction of such a metric is of geometric nature: with respect to the Green metric, the horofunction boundary of $G$ is $G$-equivariantly homeomorphic to the Gromov boundary, a technical assumption which is instrumental in Bjorklund's approach. Thereafter, Benoist and Quint (\cite{Benoist-Quint}) extended the result to distance functions defined by word lengths, by adapting the method introduced in ~\cite{Benoist-Quint-linear}.
	
	\begin{thm}[{\cite[Thm.~1.1]{Benoist-Quint}}]
		\label{CLT}
		Let $G$ be a Gromov-hyperbolic group, and suppose that $\mu$ is a non-elementary and non-arithmetic probability measure on $G$ with finite second moment, that is $\int_{G}\ell(g)^{2}\;d\mu(g)<\infty$. Then the sequence of renormalized random variables
		\begin{equation*}
		\frac{1}{\sqrt{n}}(\ell(Y_n)-n\lambda)\;,\;n\geq 1,
		\end{equation*}
		converges in distribution to a non-degenerate Gaussian law. 
	\end{thm}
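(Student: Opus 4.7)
The plan is to reduce the central limit theorem to a martingale CLT via a cocycle decomposition on the Gromov boundary $\partial G$, following the strategy of Benoist and Quint for linear groups adapted to the isometric action of $G$ on its own boundary. First, I would invoke the boundary theory of Gromov-hyperbolic groups to obtain a unique $\mu$-stationary probability measure $\nu$ on $\partial G$, together with $\P$-almost sure convergence $Y_n\cdot o\to \xi_\infty$ to a random limit distributed according to $\nu$. Non-elementarity of $\mu$ is essential here: it yields two independent loxodromic elements in the closed semigroup generated by $\supp(\mu)$, hence uniqueness of $\nu$ and the required contraction properties of the boundary action.

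The technical heart of the argument is the construction of a measurable Busemann-type cocycle $\sigma\colon G\times \partial G\to \R$, satisfying $\sigma(gh,\xi)=\sigma(g,h\xi)+\sigma(h,\xi)$, for which $|\ell(Y_n)-\sigma(Y_n,\xi_\infty)|$ remains uniformly bounded. Writing
\begin{equation*}
\sigma(Y_n,\xi_\infty)=\sum_{k=1}^{n}\sigma\bigl(X_k,\,X_{k+1}\cdots X_n\xi_\infty\bigr),
\end{equation*}
I would then show that $\sigma$ is \emph{centerable}: it admits a decomposition $\sigma(g,\xi)-\lambda=\phi(g\xi)-\phi(\xi)+\psi(g,\xi)$, with $\phi\colon\partial G\to\R$ bounded measurable and the $\psi$-contributions along the walk summing to a centered $L^{2}$-martingale. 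The finite second moment of $\mu$ transfers to $\sigma$ via the pointwise bound $|\sigma(g,\xi)|\leq \ell(g)$, so the classical martingale central limit theorem yields convergence of $n^{-1/2}(\sigma(Y_n,\xi_\infty)-n\lambda)$ to a Gaussian law, and hence of $n^{-1/2}(\ell(Y_n)-n\lambda)$ after absorbing the bounded discrepancy. Non-degeneracy of the limit, guaranteed by non-arithmeticity, is proved by contradiction: a vanishing asymptotic variance would force $\sigma(g,\cdot)-\lambda$ to be a measurable coboundary, thereby confining the values of $\ell$ on successive increments of the walk to a discrete subgroup of $\R$, contrary to assumption.

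The principal obstacle is the construction of the cocycle itself: the word-length Busemann function on $\partial G$ is only a quasi-cocycle, because the word metric lacks the geodesic regularity of the Green or a $\mathrm{CAT}(-1)$ metric. Producing a genuine additive cocycle that tracks $\ell$ with bounded deviation requires a measurable selection through the horofunction compactification and a delicate exploitation of thin-triangle and Morse-type estimates, combined with shadow-regularity of the stationary measure $\nu$; this is precisely the new technical input of Benoist and Quint relative to the earlier Green-metric CLT of Bjorklund.
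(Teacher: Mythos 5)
The paper does not prove this theorem; it is stated purely as background context and attributed verbatim to Benoist--Quint~\cite{Benoist-Quint}, so there is no internal proof to compare your attempt against. That said, your sketch is a broadly faithful rendering of the strategy in the cited reference: linearize $\ell$ by a Busemann cocycle on the horofunction compactification of $(G,\ell)$ (not the Gromov boundary itself, which for a word metric only carries a quasi-cocycle, as you correctly observe), establish centerability, feed the martingale part into a martingale CLT, and use non-arithmeticity to rule out a coboundary and hence degeneracy of the limiting variance.

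One point in your write-up deserves care: you plug the random limit point $\xi_\infty=\lim_n Y_n o$ directly into the cocycle decomposition
\begin{equation*}
\sigma(Y_n,\xi_\infty)=\sum_{k=1}^{n}\sigma\bigl(X_k,\,X_{k+1}\cdots X_n\,\xi_\infty\bigr).
\end{equation*}
This identity is algebraically valid, but $\xi_\infty$ depends on the \emph{entire} sequence $(X_k)_k$, so the summands are not adapted to any natural (forward or reversed) filtration and do not produce a martingale once centered. Benoist--Quint instead work with a boundary point that is independent of the increments generating $Y_n$ --- either a fixed $b$ distributed according to the stationary measure $\nu$, or the boundary point coming from the other half of a bilateral walk --- and then separately show that $|\ell(Y_n)-\sigma(Y_n,b)|$ is tight (indeed bounded in $L^2$) using hyperbolicity and the shadow regularity of $\nu$. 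Keeping the cocycle argument and the "tracking" argument cleanly separated in this way is precisely what makes the martingale machinery apply; collapsing them by substituting $\xi_\infty$ is where your sketch would need repair if it were to be turned into a proof.
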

	For an explanation of the assumptions on the measure $\mu$ appearing in Theorem~\ref{CLT}, we refer the reader to~\cite{Benoist-Quint}.
	It is worth noticing that all earlier works on the central limit theorem in this context rely on the stronger assumption of finiteness of some exponential moment for $\mu$. A recent paper by Mathieu and Sisto (\cite{Mathieu-Sisto}), in which Theorem~\ref{CLT} is established for the yet broader class of acylindrically hyperbolic groups, also deserves mention.
	
	\smallskip
	In light of Theorem~\ref{SLLN}, it is clear that 
	\begin{equation}
	\label{rareevents}
	\P(|\ell(Y_n)-n\lambda|\geq \delta n)\overset{n\to\infty}{\longrightarrow}0 \text{ for any }\delta>0.
	\end{equation}
	We are interested in the decay rate of the probability of such rare events. Properly speaking, we ask whether the sequence of random variables $\bigl(\frac{1}{n}\ell(Y_n)\bigr)_{n\geq 1}$ satisfies the large deviation principle (see Section~\ref{largedeviations}); loosely, it amounts to asking if there is a well-defined exponential decay rate for the probability of events of the type appearing in ~\eqref{rareevents}. 
	
	It is natural to expect the large deviation principle to hold for a large class of finitely generated groups, in particular for Gromov-hyperbolic groups; we expand slightly more on possible extensions of our approach\footnote{After the first version of this paper appeared, Boulanger, Mathieu, Sert and Sisto~\cite{Boulanger-Mathieu-Sert-Sisto} proved existence of the large deviation principle for random walks on geodesic hyperbolic spaces, thus encompassing the case of walks on Gromov-hyperbolic groups. The underpinning strategy does not differ substantially from our approach, though it relies on deeper geometric considerations.} in this direction in Section~\ref{final}. The applicability of the same strategy to such extensions, as well as to analogous questions in random matrix products, is already mentioned in~\cite{Cagri-note}.
	
	Our main result establishes the existence of the large deviation principle, with a proper convex rate function, for the collection of non-trivial free products of finitely generated groups, under a non-degeneracy assumption on the semigroup $\Ga$ generated by the support of the driving measure $\mu$. Specifically, we require that $\Ga$ is \emph{pattern-avoiding}: there exists a positive integer $D>0$ such that, for any reduced word $\omega=y_1\cdots y_D$ of \emph{type size} $D$ in the free product, there is an element $g\in \Gamma\setminus\{e\}$ which neither \emph{starts with} $\omega$ nor \emph{ends with} $\omega^{-1}$. For a precise definition, we refer to Section~\ref{patternavoid}, while the relevance of this condition to the purposes of the proof is explained in Section~\ref{strategy}. For the sake of illustration, we hasten to observe that the pattern-avoidance condition is fulfilled, for instance, if $\Ga$ intersects two distinct factors of the free product non-trivially (see Example~\ref{pattavoid}). 
	
	Expanding upon the latter observation, we precede the statement of the main result, Theorem~\ref{main}, with a simpler and more concise version which already singles out a broad class of admissible driving measures. 
	
	\begin{prop}
		\label{particularcase}
		Let $r\geq 2$ be an integer, $G_1,\dots,G_r$ non-trivial finitely generated groups, $G=G_1\ast\cdots \ast G_r$ their free product, $S_i$ a finite generating set of $G_i$ for $i=1,\dots,r$, $S=\bigcup_{i=1}^{r}S_i$, $\ell$ the length function on $G$ determined by $S$. Let $\mu$ be a probability measure on $G$, and assume its support generates a semigroup $\Ga$ with the property that, for any $i\in \{1,\dots,r \}$, there is an element $g\in \Ga$ which neither starts nor ends in the factor $G_i$.  If $(Y_n)_{n\geq 0}$ is a right random walk on $G$ with increments distributed according to $\mu$, then the sequence of random variables 
		$\bigl(\frac{1}{n}\ell(Y_n)\bigr)_{n\geq 1}$
		satisfies the weak large deviation principle with a convex rate function.
	\end{prop}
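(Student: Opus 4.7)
The plan is to reduce the weak large deviation principle to a super-multiplicative lower bound on the distribution of $\ell(Y_n)$, from which the existence of a convex rate function follows by a Fekete-type argument; properness is automatic from the deterministic bound $\ell(Y_n) \leq n\cdot \max_{g\in \supp\mu}\ell(g)$, which confines the sequence to a compact interval. The key geometric input comes from the normal form in the free product: if $g$ ends (in reduced form) in factor $G_i$ and $g'$ starts in factor $G_j$, then for any $b\in G$ whose reduced form starts in a factor distinct from $G_i$ and ends in a factor distinct from $G_j$, no cancellation occurs at either junction, giving
\begin{equation*}
	\ell(g\cdot b\cdot g')=\ell(g)+\ell(b)+\ell(g').
\end{equation*}

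The first step is to construct, for every pair $(i,j)\in\{1,\dots,r\}^2$, a \emph{buffer} $b_{ij}\in\Gamma$ satisfying this start/end pattern. Using the hypothesis, fix for each $i$ an element $h_i\in\Gamma$ whose reduced form neither starts nor ends in $G_i$. Set $b_{ii}:=h_i$. For $i\ne j$, a short case analysis shows that at least one of $h_i$, $h_j$, or the product $h_i h_j$ works: the sole unfavourable configuration is that $h_i$ ends in $G_j$ and simultaneously $h_j$ starts in $G_i$, but in this case $h_i h_j$ incurs no junction cancellation (the adjacent syllables lie in the distinct factors $G_j$ and $G_i$), so its reduced form inherits the first syllable of $h_i$ and the last syllable of $h_j$. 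Writing each $b_{ij}$ as a product of $\mu$-support elements $x_1^{(ij)}\cdots x_{C_{ij}}^{(ij)}$ produces a positive lower bound $q_{ij}:=\prod_s\mu\bigl(x_s^{(ij)}\bigr)>0$ on the probability that a $C_{ij}$-step segment of the walk realises this specific path.

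The decomposition $Y_{n+C_{ij}+m}=Y_n\cdot(X_{n+1}\cdots X_{n+C_{ij}})\cdot(X_{n+C_{ij}+1}\cdots X_{n+C_{ij}+m})$ into three independent pieces, combined with conditioning on the end-syllable of $Y_n$ belonging to $G_i$ and on the start-syllable of the third block belonging to $G_j$, yields (after selecting indices $(i^*,j^*)$ that maximise the relevant type-conditional probabilities)
\begin{equation*}
	\P\bigl(\ell(Y_N)=k+\ell(b_{i^*j^*})+k'\bigr)\ \geq\ \tfrac{q}{r^{2}}\,\P(\ell(Y_n)=k)\,\P(\ell(Y_m)=k'),
\end{equation*}
where $N=n+C_{i^*j^*}+m$ and $q:=\min_{ij}q_{ij}>0$. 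Passing from atomic probabilities to probabilities of shrinking neighbourhoods and applying Fekete's lemma to the near-subadditive sequence $n\mapsto -\log\P\bigl(\tfrac{1}{n}\ell(Y_n)\in U\bigr)$ produces a well-defined limit
\begin{equation*}
	I(x):=-\lim_{\delta\to 0}\lim_{n\to\infty}\frac{1}{n}\log\P\bigl(\tfrac{1}{n}\ell(Y_n)\in(x-\delta,x+\delta)\bigr),
\end{equation*}
which simultaneously establishes the weak LDP and the convexity of $I$ (the latter arises directly from the super-multiplicative estimate applied at $N\sim n+m$ with $k/n\to x$, $k'/m\to y$).

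The main obstacle in this plan is combinatorial bookkeeping rather than any deep geometric insight: one must simultaneously handle the $r^2$ buffers with their distinct step- and length-costs $C_{ij}$ and $\ell(b_{ij})$, aggregate probabilities across end- and start-syllable types, and transfer from exact length equalities to probabilities of shrinking intervals, all while preserving the clean super-multiplicative structure. The simplified pattern-avoidance hypothesis of the proposition is used precisely at the buffer construction of Step~1 to secure the existence of the $b_{ij}$; the stronger pattern-avoidance condition of the main theorem should support a richer class of start/end patterns via an analogous, more intricate buffer lemma.
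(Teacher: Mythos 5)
Your plan is fundamentally the same Lanford-type argument the paper uses: restrict the walk so that the length functional becomes nearly additive by inserting short \emph{buffer} elements of $\Gamma$ between blocks, pay a uniformly bounded (here, $r^{-2}$) probabilistic price for conditioning on the end/start factor, and then run a Fekete-type argument. Your buffer construction is a clean reorganisation of the $D=1$ case of the paper's Lemma~\ref{weaklengthadd}: the paper conditions on the first and last factor (cost $r^{-2}$) and, when they coincide with $G_i$, inserts $h_i$; you instead prepare a buffer $b_{ij}$ for every pair, which is equivalent and perhaps more transparent.

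Two points deserve attention. First, the ``apply Fekete'' step is not as immediate as you suggest: your super-multiplicative estimate has $N = n + m + C_{i^*j^*}$ rather than $n+m$, the target length shifts by $\ell(b_{i^*j^*})$, and the choice of $(i^*, j^*)$ (hence of $C_{i^*j^*}$ and $\ell(b_{i^*j^*})$) depends on $(n,k)$ and $(m,k')$. Consequently the map $n\mapsto -\log\P\bigl(\tfrac{1}{n}\ell(Y_n)\in U\bigr)$ is not genuinely subadditive, and Fekete's lemma does not apply verbatim. This is precisely what the paper's Lemma~\ref{lowerbound} is for: it recovers a $\liminf$ lower bound for a slightly larger window from a uniform lower bound along a non-lacunary sequence, and the argument in Proposition~\ref{existence} then proceeds by contradiction via the criterion of Proposition~\ref{criterion}. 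You acknowledge this as ``bookkeeping'', but it is the one place where a careful auxiliary lemma is actually required, so you should not treat it as a routine invocation of Fekete. Second, your claim that properness ``is automatic from the deterministic bound $\ell(Y_n)\le n\cdot\max_{g\in\supp\mu}\ell(g)$'' is both out of scope (Proposition~\ref{particularcase} only asserts the \emph{weak} LDP with a convex rate function) and incorrect in general, since $\supp\mu$ need not be finite nor bounded; properness requires the additional exponential-moment hypothesis, as in Proposition~\ref{fullLDP}.
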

	
	Observe that any semigroup $\Ga$ fulfilling the assumptions of Proposition~\ref{particularcase} avoids patterns of type size $D=1$ (the converse clearly fails, as shown in Example~\ref{pattavoid}). In order to deal with more general pattern-avoiding semigroups, our method compels us to impose an additional constraint on the size of the factors $G_1,\dots,G_r$.
	
	The complete formulation of our results reads as follows:
	
	\begin{thm}
		\label{main}
		Let $r\geq 2$ be an integer, $G_1,\dots,G_r$ non-trivial finitely generated groups of subexponential growth, $G=G_1\ast\cdots \ast G_r$ their free product, $S_i$ a finite generating set of $G_i$ for $i=1,\dots,r$, $S=\bigcup_{i=1}^{r}S_i$, $\ell$ the length function on $G$ determined by $S$. Suppose that $\mu$ is a probability measure on $G$ whose support generates a pattern-avoiding semigroup, and let $(Y_n)_{n\geq 0}$ be a right random walk on $G$ with increments distributed according to $\mu$. 
		\begin{enumerate}
			\item The sequence of random variables 
			$\bigl(\frac{1}{n}\ell(Y_n)\bigr)_{n\geq 1}$
			satisfies the weak large deviation principle with a convex rate function $I\colon \R_{\geq 0}\to [0,\infty]$.
			\item If $\mu$ has a finite exponential moment, then $I$ is a proper function and the sequence $\bigl(\frac{1}{n}\ell(Y_n)\bigr)_{n\geq 1}$ satisfies the full large deviation principle with rate function $I$.
			\item If $\mu$ has finite moment-generating function, then $I$ is the Fenchel-Legendre transform of the limiting logarithmic moment generating function of the sequence $\bigl(\frac{1}{n}\ell(Y_{n})\bigr)_{n\geq 1}$.
		\end{enumerate}
	\end{thm}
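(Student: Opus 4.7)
The plan is to base all three parts on a single near-supermultiplicative inequality for the probabilities $\P(\ell(Y_n)\in \cdot)$, derived from a combinatorial ``buffer'' construction in the free product. Using pattern-avoidance of $\Ga$ together with the subexponential growth of the factors $G_1,\dots,G_r$, one should be able to produce a positive integer $k_0$ and an element $h_0$ in the support of the $k_0$-fold convolution $\mu^{\ast k_0}$ such that, for every $g,g'\in \Ga$,
\begin{equation*}
\bigl|\ell(gh_0 g')-(\ell(g)+\ell(h_0)+\ell(g'))\bigr|\leq C,
\end{equation*}
for a constant $C$ depending only on $h_0$. The intuition is that, given a reduced type-size-$D$ pattern that would otherwise produce heavy cancellation, pattern-avoidance furnishes an element of $\Ga$ that blocks it; subexponential growth of the factors is invoked to guarantee that only finitely many classes of obstructive prefixes and suffixes arise up to bounded length, keeping the case analysis finite on the exponential scale.

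Granted such an $h_0$, independence and stationarity of the increments yield
\begin{equation*}
\P\bigl(\ell(Y_{n+m+k_0})\in I+J+[L-C,L+C]\bigr)\geq c_0 \cdot \P(\ell(Y_n)\in I)\cdot\P(\ell(Y_m)\in J),
\end{equation*}
where $L=\ell(h_0)$, $c_0=\P(X_1\cdots X_{k_0}=h_0)>0$, and $I,J\subset \R$ are arbitrary intervals, with the sum on the left interpreted in the Minkowski sense. Taking $-\log$ produces an approximate subadditive relation for the quantities $-\log\P(\ell(Y_n)\in[(\alpha-\eps)n,(\alpha+\eps)n])$, which via a Fekete-type argument delivers a well-defined limit
\begin{equation*}
I(\alpha):=-\lim_{\eps\to 0^+}\limsup_{n\to\infty}\tfrac{1}{n}\log\P\bigl(\ell(Y_n)\in[(\alpha-\eps)n,(\alpha+\eps)n]\bigr),
\end{equation*}
which, once matched with the corresponding $\liminf$, is the content of the weak large deviation principle. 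The same supermultiplicative inequality, after distributing mass linearly between $\alpha$ and $\alpha'$, renders $-I$ concave in $\alpha$, so $I$ is convex, finishing part (1).

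For part (2), the exponential moment assumption together with the submultiplicativity $\ell(Y_n)\leq \sum_{i=1}^n \ell(X_i)$ gives the Chernoff estimate
\begin{equation*}
\P(\ell(Y_n)\geq Mn)\leq e^{-\theta M n}\Bigl(\int_G e^{\theta\ell(g)}\,d\mu(g)\Bigr)^n,
\end{equation*}
which decays exponentially in $n$ once $M$ is chosen large enough. This furnishes exponential tightness of the family $\bigl(\tfrac{1}{n}\ell(Y_n)\bigr)_{n\geq 1}$, upgrades the weak LDP to a full LDP, and forces $I$ to be proper. For part (3), under finiteness of the moment generating function, applying the buffer inequality at the level of exponential moments produces, for $\Lambda_n(\theta):=\log\E[e^{\theta\ell(Y_n)}]$,
\begin{equation*}
\Lambda_{n+m+k_0}(\theta)\geq \Lambda_n(\theta)+\Lambda_m(\theta)+\theta(L-C)+\log c_0,
\end{equation*}
while $\Lambda_{n+m}(\theta)\leq \Lambda_n(\theta)+\Lambda_m(\theta)$ holds trivially for $\theta\geq 0$ by independence and submultiplicativity, with an analogous bound for $\theta<0$. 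A Fekete-type argument then delivers existence and finiteness of $\Lambda(\theta)=\lim_n\tfrac{1}{n}\Lambda_n(\theta)$ for all $\theta\in\R$, with convexity inherited from the $\Lambda_n$; the G\"artner--Ellis theorem, in combination with the full LDP from part (2), identifies $I$ with the Fenchel--Legendre transform of $\Lambda$.

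The step I expect to be the hardest is the buffer construction itself: producing $h_0$ so that the geodesic-concatenation estimate holds uniformly over all $g,g'\in\Ga$. Pattern-avoidance only rules out certain prefixes and suffixes of a fixed type size, while elements of the factors $G_i$ can cancel against $h_0$ in many subtle ways. The combinatorial analysis required to reduce the obstructive cases to a finite list, and absorb the residual cancellation into an $O(1)$ error rather than a term that grows with $n$ and spoils the exponential scale, is the true technical heart of the argument, and it is precisely here that the subexponential growth hypothesis is indispensable.
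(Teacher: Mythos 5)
The central technical step of your proposal --- a \emph{single} buffer element $h_0$ in the support of some $\mu^{\ast k_0}$ such that
\begin{equation*}
\bigl|\ell(gh_0 g')-(\ell(g)+\ell(h_0)+\ell(g'))\bigr|\leq C\quad\text{for all }g,g'\in\Gamma
\end{equation*}
--- cannot exist in general, so the argument as stated has a genuine gap. Already for $G=F_2$ with $\mu$ uniform on $\{a^{\pm 1},b^{\pm 1}\}$ (so $\Gamma=F_2$, which is pattern-avoiding), take any candidate $h_0$: for a long reduced word $u$, set $g=u$ and $g'=h_0^{-1}u^{-1}$. Then $gh_0g'=e$, while $\ell(g)+\ell(h_0)+\ell(g')\geq 2\ell(u)$ grows without bound. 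No $C$ depending only on $h_0$ can absorb this. The issue is that cancellation does not stop at the two junctions with $h_0$; once $h_0$ is consumed, $g$ and $g'$ can cancel against each other arbitrarily deeply.

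The paper resolves this by making both the buffer and the set it protects \emph{adaptive}. Lemma~\ref{weaklengthadd} does not postulate a universal $h_0$; rather, given the set $F$ of $g$ with $\ell(g)\in n B(x,\rho)$ carrying mass $\nu(F)$, it extracts a subset $A\subset F$ on which the first $D$ and last $D$ letters of the reduced-word expansion are the same for every element, at the price of a loss factor $(r\theta_T)^{-2D}$ in probability. With a common prefix and suffix fixed, a buffer $g\in\mathcal{T}$ can be chosen (depending on $A$) that does not cancel against this specific prefix/suffix, giving
\begin{equation*}
\ell(g_1g\cdots g_kg)\geq\ell(g_1)+\cdots+\ell(g_k)-k\cdot 2LD\quad\text{for all }g_1,\dots,g_k\in A.
\end{equation*}
This is where subexponential growth enters: not to ``make the case analysis finite'' as you suggest, but to guarantee $T/\log\theta_T\to\infty$ so that the restriction loss $(r\theta_T)^{-2D}$ is negligible on the exponential scale, which is precisely what lets the supermultiplicative bound survive after taking $\frac{1}{n}\log$. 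The resulting near-supermultiplicativity then only holds along a non-lacunary sequence $n_k=k(n_{j_0}+t(g))$, and a separate lemma (Lemma~\ref{lowerbound}) is needed to convert this into a genuine $\liminf$ bound on slightly larger intervals; your appeal to a bare Fekete argument glosses over this.

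Your parts (2) and (3) are sound and essentially mirror the paper: exponential tightness via the Chernoff bound $\E[e^{\theta\ell(Y_n)}]\leq C^n$ upgrades the weak LDP to a full LDP with proper rate function, and for (3) the finiteness of $\Lambda(\theta)=\lim_n\frac{1}{n}\Lambda_n(\theta)$ follows from Fekete applied to the subadditive (for $\theta\geq 0$) or superadditive (for $\theta<0$) sequence $\Lambda_n(\theta)$ --- neither direction actually needs the buffer inequality, so that part of your argument should simply be dropped. The paper identifies $I=\Lambda^{*}$ via Varadhan's lemma plus Fenchel--Moreau duality rather than G\"artner--Ellis, but given that you already have the full LDP with a proper convex rate function, either route works.
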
 
	
	A close inspection of the proof of Lemma~\ref{weaklengthadd} reveals that the whole argument leading to Theorem~\ref{main} can be readily adapted to establish Proposition~\ref{particularcase}. In particular, the last two assertions of Theorem~\ref{main} remain equally valid in the setting of Proposition~\ref{particularcase}.
	
	For a precise definition of all the terms involved in the statement of Theorem~\ref{main}, we refer the reader to Sections~\ref{preliminaries} and~\ref{largedeviations}. Let us just recall here that a probability measure $\mu$ on $G$ is said to have \emph{a finite exponential moment} if $\int_{G}\exp{(\tau\ell(g))}\;d\mu(g)<\infty$ for some $\tau>0$, and that we say it has \emph{finite moment-generating function} if $\int_{G}\exp{(\tau\ell(g))}\;d\mu(g)<\infty$ for every $\tau>0$.
	
	
	By taking $G_i=\Z$ for all $i=1,\dots,r$, we settle in particular the question of existence of the large deviation principle for random walks on free groups; in turn, this yields the result for nearest-neighbour random walks on locally finite regular trees (a straightfoward adaptation of the proof of Theorem ~\ref{main} allows to deal with regular trees of odd degree as well). For the sake of simplicity, we state the corollary in the case relevant for applications to (possibly lazy\footnote{A $G$-random walk $(Y_n)_{n}$ is customarily called \emph{lazy} if $\mu(e)\geq 1/2$; here, for convenience, we employ the terminology in order to refer to the more general case $\mu(e)>0$.}) simple random walks on trees.
	
	\begin{cor}
		\label{cor}
		Let $G$ be a free group on $r\geq 1$ generators, and let $S$ 
		be a free set of generators. Assume $\mu$ is a probability measure on $G$ whose support is contained in $S\cup S^{-1}\cup\{e\}$, and let $(Y_n)_{n\geq 0}$ be a right random walk on $G$ with increments distributed according to $\mu$. The sequence of random variables $\bigl(\frac{1}{n}\ell(Y_n)\bigr)_{n\geq 1}$, where $\ell$ is the length function on $G$ determined by $S$, satisfies the large deviation principle with a proper, convex rate function, coinciding with the Fenchel-Legendre transform of the limiting logarithmic moment generating function of $\bigl(\frac{1}{n}\ell(Y_n)\bigr)_{n\geq 1}$. 
	\end{cor}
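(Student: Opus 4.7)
My plan is to derive Corollary~\ref{cor} from Theorem~\ref{main} by specialising to $G_1 = \cdots = G_r = \Z$, each being a polynomially-growing factor, so that the free group of rank $r \geq 2$ is realised as the $r$-fold free product $\Z \ast \cdots \ast \Z$. The hypothesis $\supp(\mu) \subset S \cup S^{-1} \cup \{e\}$ guarantees $\ell \leq 1$ on the support of $\mu$, so $\mu$ automatically admits a finite moment-generating function; parts (2) and (3) of Theorem~\ref{main} will then yield the full LDP together with the asserted rate-function identification, provided pattern-avoidance is verified.

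I would verify pattern-avoidance in two cases. When $r \geq 2$ and $\supp(\mu)$ meets at least two distinct factors $G_i, G_j$ non-trivially, for every factor index $k$ at least one of $i, j$ differs from $k$, and the corresponding element of $\supp(\mu) \subset \Ga$ neither starts nor ends in $G_k$; this is the hypothesis of Proposition~\ref{particularcase}, which (by the remark following it) implies pattern-avoidance of type size $1$, so Theorem~\ref{main} applies directly and delivers all three conclusions.

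In the degenerate case ($r = 1$, or $r \geq 2$ with $\supp(\mu) \subset G_i \cup \{e\}$ for a single factor) the walk lives in a subgroup isomorphic to $\Z$ and $\tfrac{1}{n}\ell(Y_n) = \tfrac{1}{n}|Y_n|$, where $Y_n$ is a possibly lazy nearest-neighbour walk on $\Z$. Here the plan is to invoke classical Cram\'er's theorem on $\R$ to obtain the LDP for $\tfrac{1}{n}Y_n$ with convex, proper rate $J = \phi^{*}$ (the Fenchel--Legendre transform of the log-moment generating function $\phi$ of a single increment), and then apply the contraction principle to the continuous map $y \mapsto |y|$ to deduce the LDP for $\tfrac{1}{n}|Y_n|$ with rate function $I(x) = \min(J(x), J(-x))$ on $\R_{\geq 0}$.

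The main obstacle is establishing convexity of $I$ in this degenerate case, as the contraction principle does not preserve convexity in general. I plan to exploit the explicit Bernoulli-type form of $J$: writing $\mu = p\delta_{-1} + (1-p-q)\delta_0 + q\delta_1$, a direct computation with the rate function $J$ shows that the sign of $q - p$ determines on $\R_{\geq 0}$ which of $J(x), J(-x)$ is smaller, so that $I$ coincides with the restriction of $J$ (respectively $J \circ (-\mathrm{id})$) to $\R_{\geq 0}$, hence is convex. Properness then follows from $I(x) = +\infty$ for $x > 1$ (bounded increments), while the identification of $I$ with the Fenchel--Legendre transform of the limiting log-moment generating function of $\tfrac{1}{n}\ell(Y_n)$ follows from the saddle-point computation $\lim_n \tfrac{1}{n} \log \E[e^{\lambda |Y_n|}] = \sup_{x \geq 0}(\lambda x - I(x)) = I^{*}(\lambda)$ and the involutivity $I = I^{**}$ granted by convexity and lower semicontinuity.
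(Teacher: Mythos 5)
Your proposal is correct and follows essentially the same route the paper intends: specialise Theorem~\ref{main} to $G_i=\Z$, note that a measure supported in $S\cup S^{-1}\cup\{e\}$ automatically has finite moment generating function, and fall back on Cram\'er's theorem for the one-dimensional situation. You are in fact somewhat more careful than the paper's own one-line derivation: you explicitly observe that when $r\geq 2$ but $\supp(\mu)$ sits inside a single factor $G_i\cup\{e\}$, the semigroup $\Ga$ fails to be pattern-avoiding (cf.~Example~\ref{pattavoid}(2)), so Theorem~\ref{main} is genuinely not applicable and the case must be folded into the Cram\'er argument, and that one then needs the contraction principle to pass from $\frac{1}{n}Y_n$ to $\frac{1}{n}|Y_n|$ together with an extra step to recover convexity. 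Your claim that, say for $q\geq p$, one has $J(x)\leq J(-x)$ for all $x\geq 0$ is correct but stated without proof; to close that small gap, note that for $0\leq x\leq m:=q-p$ both $x$ and $-x$ lie in the region where $J$ is non-increasing so $J(x)\leq J(0)\leq J(-x)$, while for $x>m$ the maximiser $\lambda^{*}\geq 0$ in $J(x)=\lambda^{*}x-\phi(\lambda^{*})$ gives $J(-x)\geq \lambda^{*}x-\phi(-\lambda^{*})\geq \lambda^{*}x-\phi(\lambda^{*})=J(x)$ since $\phi(\lambda)\geq\phi(-\lambda)$ for $\lambda\geq 0$.
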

	Notice that the case $r=1$ of Corollary~\ref{cor} is not covered in principle by Theorem~\ref{main}; on the other hand, this case is a well-known, elementary instance of Cramer's theorem (cf.~\cite[Thm.~2.2.3]{De-Ze}) on deviations of the empirical mean of independent, identically distributed real random variables. Incidentally, our method would be readily applicable to this case as well, as we point out in section~\ref{final}, thus yielding an indirect proof of Cramer's theorem for simple random walks on $\Z$ (and $\Z^d$).
	
	\begin{rmk}
		Several remarks about Theorem~\ref{main} and Corollary~\ref{cor} are in order. 
		\begin{enumerate}
			\item A version of Grushko's theorem (\cite{Lyndon}) asserts that every finitely generated group can be decomposed in an essentially unique way as a free product of finitely many groups, which are not further decomposable as non-trivial free products. Notwithstanding this structural result, the class of examples Theorem~\ref{main} deals with is restricted, because of the limitations imposed on the generating set $S$, whose peculiar structure is crucial to our approach (cf.~Section~\ref{strategy} below). 
			
			On the other hand, the pattern-avoiding assumption on the semigroup $\Ga$ is by no means necessary for the result to hold; it is only a convenient manner of identifying a large class of examples to which our method applies\footnote{It becomes clear from the proofs that the very same method takes care, in addition, of some cases such as $\supp{\mu}\subset\{(ab)^{n}:n\in \Z\}$ in $G=\langle a,b\rangle$ a free group on two generators, in which the semigroup generated by $\supp{\mu}$ is not pattern-avoiding. Ruling out such trivial examples, it doesn't seem unlikely that a failure of the pattern-avoidance condition actually forces a conjugate of $\Ga$ to lie in one of the factors.}. Therefore, it stands to reason to expect that a technical refinement of our method would allow to weaken the assumption on the support of the driving measure, and deal with the case in which no conjugate of the semigroup $\Ga$ lies in a single factor. In this respect, see the proof of Lemma~\ref{weaklengthadd}. A similar result would yield, notably, that existence of the LDP for the length function is stable under taking free products.
			\item The result in Corollary~\ref{cor} might also be derived, when $2r=p+1$ for a positive prime $p$, from the large deviation principle for random walks on linear algebraic groups\footnote{This has been pointed out to the author by C.~Sert.} (see~\cite[Thm.~3.3]{Cagri-annals}), by choosing an appropriate representation of the free group in the projective special linear group $\PSL_2(\Q_p)$\footnote{The rank-one algebraic group $\PSL_2(\Q_p)$ acts by isometries on its Bruhat-Tits tree $\mathcal{T}$, which is regular of degree $p+1$ (for the construction, we refer to Serre's book~\cite{Serre}). Hyperbolic elements of $\PSL_2(\Q_p)$ act on $\mathcal{T}$ as hyperbolic elements in the geometric sense (cf.~\cite[Sec.~6]{Quint}). Choosing a base vertex $o\in \mathcal{T}$, the translation distance from $o$ corresponds, up to a multiplicative factor, to the operator norm on $\PSL_2(\Q_p)$ derived from a choice of a $K$-invariant ultrametric norm on the local field $\Q_p$, where $K<\PSL_2(\Q_p)$ is the compact stabilizer of $o$. Selecting hyperbolic elements which generate a Zariski-dense free subgroup of $\PSL_2(\Q_p)$ amounts to definining an isometric embedding of the corresponding free group in $\PSL_2(\Q_p)$.}.
			Our approach is different in that it resorts to the intrinsic geometric properties of the free group, rather than appealing to a representation. 
			\item Sharp large deviations estimates for the word-length functional of finite-range random walks on free groups are already present in the work of Lalley\footnote{We thank S.~M\"{u}ller for drawing our attention to this reference.} (\cite[Thm.~7.2]{Lalley}). The techniques adopted there differ significantly from ours, hinging on an extension of the Perron-Frobenius theory of nonnegative matrices to certain inhomogeneous matrix products; they yield finer information on the rate function, notably strict convexity, but require the assumption of \emph{aperiodicity} of the random walk (cf.~\cite{Lalley}), which our method does not necessitate.
		\end{enumerate}
	\end{rmk}
	
	\begin{rmk}
		Our hypothesis on the support of $\mu$ is unrelated to the choice of the generating set $S$. This makes Theorem~\ref{main} applicable, for instance, to the following circumstance, in which the driving measure has  \emph{apriori} no connection with the generating set. Let $G$ be a finitely generated group, $H<G$ a finite-index subgroup (hence $H$ is finitely generated by Schreier's subgroup lemma), $S\subset H$ a finite generating set of $H$, $T\subset G$ a set of representatives of right cosets of $H$ in $G$, $\tilde{S}=\{st:s\in S,t\in T\}$ the corresponding finite generating set of $G$. Suppose that $\tilde{\mu}$ is a probability measure on $G$ whose support is contained $\tilde{S}\cup \tilde{S}^{-1}\cup \{e\}$, thus giving rise to a nearest-neighbour random walk $(Y_n)_{n\in \N}$ on the Cayley graph $\text{Cay}(G,\tilde{S})$ of $G$ with respect to $\tilde{S}$. Let $\tau_{1}<\tau_2<\cdots \tau_n<\cdots$ be the strictly increasing sequence of stopping times defined by the successive instants in which the random walk visits $H$; they are all finite $\P$-almost surely, since $H$ has finite index in $G$. By an iterative application of the strong Markov property (\cite[Chap.~17]{Klenke}) to the process $(Y_{n})_{n\in \N}$, it follows that $H$-valued process $(Y_{\tau_n})_{n\in \N}$ (where we agree that $Y_{\tau_0}=e$) is a right random walk on $H$ driven by a measure $\mu$ having finite moment-generating function with respect to the word length determined by $S$; if $H$ is a non-trivial free product of finitely generated groups, all conclusions of Theorem~\ref{main} hold.
		
		An example of interest is the arithmetic group $\SL_2(\Z)$, which contains a multitude of finite-index free subgroups (cf.~\cite[Chap.~II]{deLaHarpe}).  
	\end{rmk}
	
	\subsection{Outline of the strategy}
	\label{strategy} 
	
	To illustrate the overarching strategy of our proof of Theorem~\ref{main}, it is informative to recall the indirect approach to the proof of Cramer's theorem for i.i.d.~real random variables, put forward by Lanford (\cite{Lanford}). If $(X_n)_{n\geq 1}$ is a sequence of i.i.d.~$\R$-valued random variables and $S_{n}=\sum_{i=1}^{n}X_i$ denotes the sequence of partial sums, then, for every $x\in \R$ and $\eps>0$, the limit $\lim_n\frac{1}{n}\log{\P\bigr(\frac{1}{n}S_n\in (x-\eps,x+\eps)\bigr)}$ exists in $[-\infty,0]$ by supermultiplicativity of the sequence $\P\bigl(\frac{1}{n}S_{n}\in (x-\eps,x+\eps) \bigr)$, which in turn is given by additivity of the the process $(S_n)_{n\geq 1}$. The weak LDP now follows from a standard result in large deviations' theory (see Proposition~\ref{criterion}). Similarly, the weak LDP holds for any additive functional\footnote{This means that $\ell'(Y_{n+m})=\ell'(Y_n)+\ell'(Y_m)$ for every $n,m\geq 0$.} $\ell'$ of a random walk $(Y_n)_{n\in \N}$ on a group $G$.
	
	The major obstacle, when attempting to transport this argument to our context, lies in the defect of additivity of length functions on discrete groups; subadditivity only ensures supermultiplicativity of the sequence $\P(\frac{1}{n}\ell(Y_n)\in I)$ for intervals of the form $I=(-\infty,x)$. Still, if the random walk can be restricted to subsets in which the length function is almost additive (cf.~Lemma~\ref{weaklengthadd} and the terminology introduced thereunder) without sizeable loss in the exponential decay rate of the corresponding probabilities, then Lanford's approach carries over almost unaffectedly. 
	Specifically, the structure of the generating set $S$, obtained by concatenating generating sets of the various factors, enables us to quantify neatly the lack of additivity in terms of the reduced-word expansion of the elements involved; the pattern-avoiding assumption on the semigroup $\Ga$ can then be leveraged to confine the attention to subsets on which the length function is \emph{weakly additive}, and which are attained by the random walk with sufficiently high probability on an exponential scale. This is detailed in Lemma~\ref{weaklengthadd}. Once a uniform lower bound for the loss of additivity is achieved, it is possible to deduce that, if $\gamma=\limsup_{n}\frac{1}{n}\log{\P\bigl(\frac{1}{n}\ell(Y_n)\in (x-a,x+a)\bigr)}$ for given $x,a\in \R_{>0}$, then the bound\linebreak $\P\bigl(\frac{1}{n_k}\ell(Y_{n_k})\in (x-a,x+a)\bigr)\geq e^{n_k(\gamma-\eta)}$ ($\eta$ being an arbitrarily small parameter) holds for a non-lacunary sequence of integers $(n_k)_k$. The arithmetic nature of such a sequence permits to deduce a lower bound $\liminf_{n}\frac{1}{n}\log{\P\bigl(\frac{1}{n}\ell(Y_n)\in (x-b,x+b)\bigr)}\geq \gamma-\eta$, at the minor cost of choosing $b$ strictly larger than $a$; this is the purpose of Lemma~\ref{lowerbound}, which in a sense plays the role of Fekete's lemma in Lanford's original argument. The actual proof of the weak LDP is then articulated in Proposition~\ref{existence}. Convexity of the rate function follows from entirely analogous arguments, as the proof of Proposition~\ref{convex} highlights. Finally, the remaining properties of the rate function mentioned in the statement of Theorem~\ref{main} are inferred from well-known foundational results in the theory of large deviations (cf.~Proposition ~\ref{weakstrong}, Theorem~\ref{momentgen} and Sections~\ref{further},~\ref{FL}). 
	
	As a concluding comment, let us point out that the strategy outlined here parallels
	arguments employed in~\cite{Cagri-annals} to deal with large deviations of the Cartan projection of random matrix products; in this context, a weak form of additivity for the Cartan projection is satisfied on $(r,\eps)$-Schottky semigroups, as shown by Benoist (\cite{Benoist}). The restriction of the random walk to such semigroups is then made possible by a result of Abels-Margulis-Soifer (\cite{Abels-Margulis-Soifer}), establishing the ubiquity of $(r,\eps)$-proximal elements in Zariski-dense semigroups.  
	
	\subsection{Outline of the article}
	
	We begin with some preliminaries on random walks on finitely generated groups in Section~\ref{preliminaries}, which mainly serve the purpose of fixing notation and elucidating the nature of the pattern-avoiding assumption we impose on the semigroup $\Ga$. In Section~\ref{largedeviations} we recall some standard terminology from the theory of large deviations, together with a few general facts which are employed in the proof of Theorem~\ref{main}. Sections~\ref{exist} and~\ref{conv} are devoted to the proof our main result~\ref{main}; specifically, in Section~\ref{exist} we establish existence of the large deviation principle, while in Section~\ref{conv} we prove convexity of the rate function, which, together with properness, allows us to identify it as the convex conjugate of a logarithmic moment generating function. Finally, in Section~\ref{final} we assemble ideas on possible generalizations of Theorem~\ref{main}, list some open questions and formulate related conjectures. 
	
	
	\subsection*{Acknowledgments}
		This work owes a major debt to \c{C}agri Sert, to whom the author expresses his gratitude for several insightful comments and enlightening conversations. Special thanks go to the referee for a thorough reading of the article, which tremendously helped improve its quality. Lastly, we would like to thank Manfred Einsiedler for valuable remarks on a preliminary version, as well as Sebastian M\"{u}ller for providing many useful references and observations.

	\section{Random walks on groups}
	\label{preliminaries}
	\subsection{Word length and metric on a finitely generated group}
	Convenient sources for the material presented hereunder are~\cite{deLaHarpe,Lyons-Peres,Woess}. 
	
	Let $G$ be a finitely generated group with identity element $e$, $S\subset G$ a finite generating set. Let $S^{-1}=\{s^{-1}:s\in S\}$ denote the set of inverses of the elements in $S$, so that  
	\begin{equation*}
	G=\{s_1\cdots s_n:n\geq 1,s_i \in S\cup S^{-1} \text{ for all }1\leq i\leq n\}.
	\end{equation*} 
	
	We define the \emph{word length} $\ell$ detemined by the generating set $S$ as the function $\ell\colon G \to \N$ given by 
	\begin{equation*}
	\ell(g)=\inf\{n\in \N:\text{ there exist }s_1,\dots, s_n\in S\cup S^{-1} \text{ such that }g=s_1\cdots s_n  \}
	\end{equation*}
	for every $g\in G$,
	with the understanding that $\ell(e)=0$. Then $\ell$ is a length function, meaning that it satisfies the following properties: 
	\begin{itemize}
		\item $\ell(g)\geq 0$ for all $g\in G$ and $\ell(g)=0$ if and only if $g=e$; 
		\item $\ell(g^{-1})=\ell(g)$ for all $g \in G$;
		\item $\ell(g_1g_2)\leq \ell(g_1)+\ell(g_2)$ for all $g_1,g_2 \in G$.
	\end{itemize}
	
	The word length $\ell$ determines a distance function $d$ on $G$, called the \emph{word metric} associated to the generating set $S$, defined by $d(g_1,g_2)=\ell(g_1^{-1}g_2)$ for all $g_1,g_2\in G$. The word metric $d$ is invariant for the action of $G$ on itself by left translation, namely $d(gg_1,gg_2)=d(g_1,g_2)$ for all $g,g_1,g_2\in G$.
	
	\smallskip
	We denote by $\text{Cay}(G,S)=(V,E)$ the Cayley graph of $G$ with respect to $S$; we recall that this is the simple, undirected graph whose vertex set $V$ is the group $G$, where two vertices $g_1,g_2\in V$ are connected by an edge $e=\{g_1,g_2\}\in E$ if and only if $d(g_1,g_2)=1$. In other words, there is an edge connecting $g_1$ to $g_2$ if and only if there is $s\in S\cup S^{-1}\setminus\{e\}$ such that $g_2=g_1 s$. The graph $\text{Cay}(G,S)$ is connected, transitive and locally finite of degree $|S\cup S^{-1}\setminus \{e\}|$. The word metric $d$ on $G$ corresponds, via this identification, to the path distance on the vertex set $V$ (cf.~\cite[Chap.~3]{Lyons-Peres}).
	
	\smallskip
	Let $B^{G}(T)=\{g\in G:\ell(g)\leq T \}$ be the closed $d$-ball of radius $T$ centered at the identity, for any $T\in \R_{\geq 0}$. As the sequence $\bigl(|B^{G}(n)|\bigr)_{n\geq 1}$ is submultiplicative, the limit $\gamma_{S}=\lim_n|B^{G}(n)|^{1/n}$ exists; we say that $G$ has \emph{subexponential growth} if $\gamma_S=1$, a property which is actually independent of the generating set $S$. Recall that a broad class of finitely generated groups with subexponential (in fact, polynomial) growth consists of nilpotent groups (\cite{Wolf}). 
	
	\smallskip
	If $G=G_1\ast \cdots \ast G_r$ is the free product (cf.~\cite{Bourbaki-algebra}) of finitely generated groups $G_1,\dots,G_r$, we shall always restrict our considerations to the following kind of generating sets (and corresponding word lengths): we fix generating sets $S_i\subset G_i$ for each factor $G_i$ of the free product, and take the union $S=\bigcup_{i=1}^{r}S$ as generating set for $G$. 
	
	\subsection{Free products and pattern-avoiding subsets}
	\label{patternavoid}
	
	Let $r\geq 2$ be an integer, $G_1,\dots,G_r$ non-trivial finitely generated groups, and let
	$G=G_1\ast \cdots \ast G_r$ be the free product of the $G_i$'s. We shall identify each $G_i,\;1\leq i\leq r$, with its isomorphic copy embedded in $G$. 
	
	\begin{lem}[{\cite[Chap.~II, Prop.~1]{deLaHarpe}}]
		\label{uniquedec}
		For any non-trivial element $g\in G$, there exist uniquely determined non-trivial elements $x_1\in G_{i_1},\dots,x_m\in G_{i_m}$, with $i_j\neq i_{j+1}$ for all $1\leq j\leq m-1$, such that $g=x_1x_2\cdots x_m$.
	\end{lem}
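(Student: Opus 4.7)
The plan is to establish existence by an obvious reduction procedure and uniqueness via the classical van der Waerden trick, i.e.\ by exhibiting a faithful action of $G$ on the set of reduced sequences.

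For existence, observe that since $S = \bigcup_{i=1}^{r}S_i$ generates $G$ and each $S_i \subset G_i$, any non-trivial $g \in G$ admits at least one expression $g = y_1 y_2 \cdots y_k$ with $y_j \in G_{i_j}$ for some indices $i_j \in \{1,\dots,r\}$. I would then apply the following reductions iteratively: delete any factor equal to $e$, and whenever two consecutive factors $y_j, y_{j+1}$ happen to lie in the same $G_i$, collapse them to the single element $y_j y_{j+1} \in G_i$. Each step strictly decreases the number of factors, so the procedure terminates at a representation $g = x_1 \cdots x_m$ of the required form.

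For uniqueness, let $W$ denote the set of \emph{reduced sequences}, namely finite tuples $(x_1, \ldots, x_m)$ with $m \geq 0$, each $x_j \in G_{i_j} \setminus \{e\}$, and $i_j \neq i_{j+1}$, including the empty sequence $\emptyset$. For each index $i \in \{1,\dots,r\}$ and each $a \in G_i$, define a map $\varphi_i(a)\colon W \to W$ by the following rules: $\varphi_i(a)$ fixes every sequence if $a = e$; otherwise, given $(x_1,\dots,x_m) \in W$, it prepends $a$ when $m = 0$ or $x_1 \notin G_i$; it replaces $x_1$ by $a x_1$ when $x_1 \in G_i$ and $a x_1 \neq e$; and it deletes $x_1$ when $x_1 \in G_i$ and $a x_1 = e$. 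A direct case analysis shows that $\varphi_i\colon G_i \to \mathrm{Sym}(W)$ is a group homomorphism for each $i$. By the universal property of the free product, the family $\{\varphi_i\}_{i=1}^{r}$ assembles into a unique homomorphism $\Phi\colon G \to \mathrm{Sym}(W)$. Given any reduced expression $g = x_1 \cdots x_m$ in $G$, an induction on $m$ shows that $\Phi(g)(\emptyset) = (x_1, \ldots, x_m)$; hence the tuple is recovered from $g$ alone, so two reduced expressions for the same $g$ must coincide.

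The main obstacle is the verification that each $\varphi_i$ is a genuine homomorphism, i.e.\ that $\varphi_i(ab) = \varphi_i(a) \circ \varphi_i(b)$ holds pointwise on $W$: one must exhaust the four cases in the definition for each of the two successive applications and check they agree. While unavoidable, this bookkeeping is routine, and once it is in place the conceptual step of invoking the universal property of $G = G_1 \ast \cdots \ast G_r$ is immediate.
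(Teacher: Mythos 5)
Your proof is correct. The paper gives no proof of its own for this lemma — it simply cites it as Proposition~1 of Chapter~II in de~la~Harpe's \emph{Topics in Geometric Group Theory} — and the argument recorded there is exactly the van~der~Waerden/Artin trick you carry out: existence by iterated cancellation, and uniqueness by constructing homomorphisms $\varphi_i\colon G_i \to \mathrm{Sym}(W)$ on the set $W$ of reduced sequences, assembling them via the universal property of $G_1 \ast \cdots \ast G_r$, and evaluating at the empty sequence. The only genuine work, as you note, is the case analysis verifying $\varphi_i(ab) = \varphi_i(a)\circ\varphi_i(b)$; that is routine and your outline is sound. One cosmetic remark: it is cleaner to have $\varphi_i(a)$ act on a sequence by prepending/absorbing on the \emph{left}, as you do, and then note that $\Phi(x_1\cdots x_m)(\emptyset) = \Phi(x_1)\bigl(\cdots \Phi(x_m)(\emptyset)\bigr)$; the induction should be on the tail $x_2\cdots x_m$, using that $i_1 \neq i_2$ forces the prepend rule at the final step — which is exactly what you sketch.
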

	
	Any product $x_{1}\cdots x_m$ as in Lemma~\ref{uniquedec} is referred to as a \emph{reduced word} of \emph{type size} $m$ in the free product; correspondingly, we shall also say that $g=x_1\cdots x_m$ is an element of type size $m$. For any $i\in \{1,\dots,m\}$, we call the element $x_i$ the $i$-th \emph{letter} of the reduced word $x_1\cdots x_m$.
	
	\begin{rmk}
		Suppose that we fix a generating set $S_i\subset G_i$ for each factor of the free product, and let $\ell_i$ denote the associated word length on $G_i$. Then, if $\ell$ is the word length determined by the generating set $S=\bigcup_{i=1}^{r}S_i\subset G$ and if $g,x_1,\dots,x_m$ are as in Lemma~\ref{uniquedec}, it holds $\ell(g)=\ell(x_1)+\cdots +\ell(x_m)$. Observe in particular that, while the \emph{word length} of an element $g\in G$ depends on the choice of the generating sets for the factors, the \emph{type size} of $g$ does not.  
	\end{rmk}
	
	Let $\omega=y_1\cdots y_d$ be a reduced word of type size $d$, $g\in G$ an element of type size at least $2$, with reduced-word decomposition $g=x_1\cdots x_m$. We shall say that $g$
	\begin{itemize}
		\item \emph{starts with} $\omega$ if $x_1\cdots x_{\inf\{d,\lfloor m/2\rfloor\}}=y_1\cdots y_{\inf\{d,\lfloor m/2\rfloor\}}$, and
		\item \emph{ends with} $\omega$ if $ x_{m-\inf\{d,\lfloor m/2\rfloor\}+1}\cdots x_m=y_1\cdots y_{\inf\{d,\lfloor m/2\rfloor\}}$,
	\end{itemize}
	where $\lfloor a \rfloor$ indicates the integer part of a real number $a$.
	Notice that the definition is independent of any choice of generating sets for the factors $G_1,\dots,G_r$ of the free product.
	
	\begin{ex}
		If $G=\langle a,b\rangle$ is a free group on two generators $a$ and $b$, then the element $abab$ starts with $ab$ and ends with $ab$, while the element $abab^{-1}a^{-1}$ starts with $ab$ and ends with $b^{-1}a^{-1}$. Also, according to our definition, the latter element starts with any word $ab\omega'$ obtained by juxtaposing a reduced word $\omega'$ to $ab$ in such a way that $ab\omega'$ is again a reduced word. 
	\end{ex} 
	
	A subset $\mathcal{T}\subset G$ is called \emph{pattern-avoiding} if there exists a positive integer $D>0$ such that, for any reduced word $\omega=y_1\cdots y_D$ of type size $D$ in the free product, there exists $g\in \mathcal{T}$ such that $g$ does not start with $\omega$ and does not end with $\omega^{-1}=y_{D}^{-1}\cdots y_1^{-1}$ (in particular, $g$ has type size at least $2$). In case we need to keep track of the integer $D$, we shall say that $\mathcal{T}$ \emph{avoids patterns of type size $D$}. The examples presented below clarify the notion.
	
	\begin{ex}
		\label{pattavoid}
		\begin{enumerate}
			\item Let $G=\langle a,b,c \rangle$ be a free group on three generators $a,b$ and $c$. The sets
			\begin{equation*}
			\mathcal{T}_1=\{ab,bc \},\; \mathcal{T}_2=\{acb,a^{3}bca^{-2}\},\; \mathcal{T}_3=\{aba^{-1},bab^{-1}\}
			\end{equation*}
			are pattern-avoiding, while the set
			\begin{equation*}
			\mathcal{T}_4=\{ab,ac^{2},ca^{-1} \}
			\end{equation*}
			is not pattern-avoiding, as all its elements either start with $a$ or end with $a^{-1}$.
			\item If $\mathcal{S}\subset g G_i g^{-1}$ for some $i\in\{1,\dots,r\}$ and some $g=x_1\cdots x_m\in G$, then the semigroup $\Ga$ generated by $\mathcal{S}$ is not pattern-avoiding: all its elements start with $x_1\cdots x_m$ and end with $(x_1\cdots x_m)^{-1}$.
			\item Suppose that there are indices $i\neq j\in \{1,\dots,r\}$ such that $\mathcal{S}\cap (G_i\setminus\{e\})\neq \emptyset$ and $\mathcal{S}\cap (G_j\setminus\{e\})\neq \emptyset$. Then the semigroup $\La$ generated by $\mathcal{S}$ is pattern-avoiding: if $x\in \mathcal{S}\cap (G_i\setminus\{e\})$ and $y\in \mathcal{S}\cap (G_j\setminus\{e\})$, then $\{xy,yx\}$ is pattern-avoiding and contained in $\La$.
			\item The semigroup generated by $\{aba,a^{2}ba^{2}\}$ in $G=\langle a,b\rangle$ avoids patterns of type size $1$, but does not satisfy the hypotheses of Proposition~\ref{particularcase}: its elements start and end in the factor $\langle a\rangle $. 
		\end{enumerate}
	\end{ex}
	
	Obviously, if $\mathcal{T}'\subset \mathcal{T}\subset G$ and $\mathcal{T}'$ is pattern-avoiding, then so is $\mathcal{T}$. Conversely, the following elementary observation is essential for our line of reasoning in Section~\ref{exist}: if $\mathcal{T}$ is pattern-avoiding, then there exists a finite subset $\mathcal{T}'\subset \mathcal{T}$ which is also pattern-avoiding\footnote{A simple enumeration of all possibilities shows that $\mathcal{T}'$ can be chosen with cardinality at most $3$.}. 
	
	\subsection{Random walks on finitely generated groups}
	
	Let $\mu$ be a probability measure on the group $G$; equivalently, $\mu$ is a function defined on $G$ taking non-negative real values and satisfying $\sum_{g\in G}\mu(g)=1$. Then $\mu$ defines a right random walk on $G$ as follows: let $(X_n)_{n\geq 1}$ be a sequence of independent, identically distributed $G$-valued random variables with common law $\mu$. Implicitly, we consider them to be defined over a probability space $(\Omega,\cF,\P)$, which will be fixed hereinafter. We define a $G$-valued stochastic process $(Y_n)_{n\in  \N}$ by setting $Y_0=e$, $Y_n=X_1\cdots X_n$ for every integer $n\geq 1$. The process $(Y_n)_{n\in \N}$ is called a \emph{right random walk} on $G$, issued from the origin $e$ with increments distributed according to $\mu$. Equivalently, one may defined the process $(Y_n)_{n\in \N}$ as a Markov chain on $G$ issued from $e$ with transition matrix $Q=(q(x,y))_{x,y \in G}$ given by $q(x,y)=\mu(x^{-1}y)$ for all $x,y \in G$ (cf.~\cite[Sec.~1.1]{Woess}). 
	
	\smallskip
	Let $\supp{\mu}=\{g\in G:\mu(g)>0\}$ be the support of the measure $\mu$. If $\supp{\mu}\subset S\cup S^{-1}$,\linebreak then the process $(Y_n)_{n\in \N}$ can also be interpreted as a nearest-neighbour random walk on the Cayley graph $\text{Cay}(G,S)$, where the walker in position $x$ moves to $xs$ with probability $\mu(s)$, for all $s \in S\cup S^{-1},x \in G$. Notice that we are not excluding the case $\mu(e)>0$, so that the walker may have positive probability of remaining where it is. 
	
	
	Let $\E[X]$ denote the expectation of a random variable $X\colon \Omega \to \R$ with respect to the probability measure $\P$. If $\mu$ has finite first moment, the sequence of renormalized averaged lengths
	\begin{equation*}
	\frac{1}{n}\E[\ell(Y_n)],\;n\geq 1,
	\end{equation*}
	is a subadditive real sequence, and as such converges to a limit $\lambda\in \R_{\geq 0}$, called the escape rate or speed of the random walk $(Y_n)_{n\in \N}$. As mentioned in the introduction (Theorem~\ref{SLLN}), $\P$-almost every trajectory $(y_n)_{n\geq 0}\in G^{\N}$ of the random walk actually satisfies $\frac{1}{n}\ell(y_n)\overset{n\to\infty}{\longrightarrow} \lambda$.
	
	\begin{rmk}
		\begin{enumerate}
			\item We could equally well consider random walks issued at any initial vertex $g_0\in G$, by defining $Y'_0=g_0$, $Y'_n=g_0X_1\cdots X_n$ for any $n\geq 1$. It is then natural to consider the renormalized distance $\frac{1}{n}d(g_0,Y'_n)$ which, by invariance of $d$ under left translations, equals precisely $\frac{1}{n}d(e,X_1\cdots X_n)=\frac{1}{n}\ell(Y_n)$. Hence, for the purpose of our considerations, there is no loss of generality in assuming that the random walk starts at the origin.
			\item Similarly, restricting to \emph{right} random walks does not result in any loss of generality; if $Y'_n=X_n\cdots X_1,\;n\geq 1,$ is a left random walk issued from the origin with driving measure $\mu$, then $(Y_n^{-1})_{n\in \N}$ is a right random walk with driving measure $\iota_{*}\mu$, given by $\iota_{*}\mu(g)=\mu(g^{-1})$ for every $g\in G$, and $\ell(Y_n^{-1})=\ell(Y_n)$ for every $n\in \N$.
		\end{enumerate}
	\end{rmk}

	\section{Large deviation principle}
	\label{largedeviations}
	In this section, we briefly review some of the terminology that is usually employed in the theory of large deviations. For a comprehensive introduction to the subject, the reader is referred to~\cite{De-Ze}.
	
	\smallskip
	Throughout this section, $X$ denotes a Hausdorff regular topological space, endowed with the Borel $\sigma$-algebra $\cB$. Let $(\mu_n)_{n\geq 1}$ be a sequence of Borel probability measures on $X$, $I\colon X \to[0,\infty]$ a lower semicontinuous function. The \emph{effective domain} of $I$ is the set $D_{I}=\{x\in X:I(x)<\infty\}$.
	
	\begin{defn}
		\label{LDP}
		We say that the sequence $(\mu_n)_{n\geq 1}$ satisfies the large deviation principle (or, in abridged form, LDP) with rate function $I$ if, for any Borel measurable set $\La\subset X$, 
		\begin{equation*}
		-\infl_{x\in \La^{\circ}}I(x)\leq \lb(\La)\leq \ub(\La)\leq-\infl_{x \in \overline{\La}}I(x)\;,
		\end{equation*}
		where $\La^{\circ}$ and $\overline{\La}$ denote the interior and the closure of $\La$, respectively.
	\end{defn}
	
	We observe in passing that, for a given sequence $(\mu_n)_{n\geq 1}$, there is at most one lower semicontinuous function $I$ for which the LDP can hold  (\cite[Lem.~4.1.4]{De-Ze}).
	
	In Definition~\ref{LDP}, it is obviously equivalent to require that
	\begin{equation}
	\label{lb}
	\lb(V)\geq -\infl_{x\in V }I(x) \text{ for every open set }V\subset X
	\end{equation}
	and 
	\begin{equation}
	\label{ub}
	\ub(F)\leq -\infl_{x\in F }I(x) \text{ for every closed set }F\subset X.
	\end{equation}
	If the lower bound~\eqref{lb} holds for any open set $V\subset X$, while the upper bound~\eqref{ub} holds just for all compact sets $K\subset X$, then we say that the sequence $(\mu_n)_{n\geq 1}$ satisfies the weak large deviation principle (weak LDP) with rate function $I$. 
	
	If $(Z_n)_{n\geq 1}$ is a sequence of $X$-valued random variables, and $\mu_n$ denotes the law of $Z_n$ for every $n\geq 1$, we shall say that $(Z_n)_{n\geq 1}$ satisfies the (weak) LDP if the sequence $(\mu_n)_{n\geq 1}$ satisfies the (weak) LDP. 
	
	\smallskip
	Under certain conditions, we may retrieve the full LDP from the existence of the weak LDP. The most common of these conditions involves the notion of exponential tightness. 
	
	\begin{defn}
		We say that a sequence $(\mu_n)_{n\geq 1}$ of Borel probability measures on $X$ is exponentially tight if, for every $\alpha\in \R_{\geq 0}$, there exists a compact set $K\subset X$ such that 
		\begin{equation*}
		\ub(X\setminus K)< -\alpha\;.
		\end{equation*}
	\end{defn}
	
	In other words, the mass is concentrated on compact sets, on an exponential scale.
	
	It is intuitively clear that exponential tightness enables to pass from a weak form of the LDP to a strong form, something which we clarify in the following proposition (cf.~\cite[Lem.~1.2.18]{De-Ze}).
	\begin{prop}
		\label{weakstrong}
		Let $(\mu_n)_{n\geq 1}$ be an exponentially tight sequence of Borel probability measures on $X$. Assume that $(\mu_n)_{n\geq 1}$ satisfies the weak LDP with rate function $I$. Then:
		\begin{enumerate}
			\item $(\mu_n)_{n\geq 1}$ satisfies the LDP with rate function $I$;
			\item $I$ is a proper function.
		\end{enumerate}
	\end{prop}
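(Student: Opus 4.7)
The plan is to split the argument into the two assertions, the first of which is purely topological bookkeeping while the second is a direct combination of the weak lower bound with lower semicontinuity. For assertion (1), I would fix a closed set $F\subset X$ and an arbitrary level $\alpha>0$; exponential tightness yields a compact set $K_\alpha$ with $\ub(X\setminus K_\alpha)<-\alpha$. Since $F\cap K_\alpha$ is compact (being closed inside a compact set), the weak LDP upper bound gives
\begin{equation*}
\ub(F\cap K_\alpha)\leq -\infl_{x\in F\cap K_\alpha} I(x)\leq -\infl_{x\in F}I(x).
\end{equation*}
From the inclusion $F\subset (F\cap K_\alpha)\cup (X\setminus K_\alpha)$ and the elementary identity $\limsup_n \tfrac{1}{n}\log(a_n+b_n)=\max\bigl\{\limsup_n\tfrac{1}{n}\log a_n,\;\limsup_n\tfrac{1}{n}\log b_n\bigr\}$ valid for non-negative sequences, I would conclude
\begin{equation*}
\ub(F)\leq \max\Bigl\{-\infl_{x\in F}I(x),\;-\alpha\Bigr\}.
\end{equation*}
Letting $\alpha\to\infty$ delivers the desired upper bound on $F$, which, paired with the weak lower bound already assumed, yields the full LDP.

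For assertion (2), I would fix $\alpha\geq 0$ and consider the level set $\Psi_I(\alpha)=\{x\in X: I(x)\leq \alpha\}$. Lower semicontinuity of $I$ ensures $\Psi_I(\alpha)$ is closed. For compactness, I would again choose a compact $K_\alpha$ with $\ub(X\setminus K_\alpha)<-\alpha$; since $X\setminus K_\alpha$ is open, the weak lower bound~\eqref{lb} gives
\begin{equation*}
-\infl_{x\in X\setminus K_\alpha} I(x)\leq \lb(X\setminus K_\alpha)\leq \ub(X\setminus K_\alpha)<-\alpha,
\end{equation*}
so that $I(x)>\alpha$ for every $x\in X\setminus K_\alpha$. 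This forces $\Psi_I(\alpha)\subset K_\alpha$, and a closed subset of a compact space is compact.

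The whole proof is essentially routine bookkeeping once the right decomposition is chosen; the only step deserving a moment of thought is the upgrade of the upper bound, where the max--sum identity for $\limsup\tfrac{1}{n}\log$ is the hinge that permits the negligible contribution of $X\setminus K_\alpha$ to be absorbed into an arbitrarily large negative constant. No genuine obstacle is anticipated, because both conclusions follow directly from the weak LDP coupled with the defining property of exponential tightness, without any further structural hypothesis on $X$ beyond the standing assumption of Hausdorff regularity.
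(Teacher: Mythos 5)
Your proof is correct and is essentially the standard argument of Dembo--Zeitouni (\cite[Lem.~1.2.18]{De-Ze}), which the paper cites without reproducing: decompose a closed $F$ as $(F\cap K_\alpha)\cup(X\setminus K_\alpha)$, use the max--sum identity for $\limsup\frac{1}{n}\log$ to absorb the tail, and for properness observe that the weak lower bound on the open set $X\setminus K_\alpha$ forces the level set $\{I\le\alpha\}$ into $K_\alpha$. No gaps; both steps are exactly as in the reference.
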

	
	The following statement establishes a criterion to determine whether the weak LDP holds, without knowing the rate function in advance. It will be the key tool to prove existence of the weak LDP in our context.
	
	\begin{prop}[{\cite[Thm.~4.1.11]{De-Ze}}]
		\label{criterion}
		Let $(\mu_n)_{n\geq 1}$ be a sequence of Borel probability measures on $X$. Define the function $I\colon X \to[0,\infty]$ by
		\begin{equation}
		\label{suplower}
		I(x)=\supl_{x\in V\emph{open}}-\lb(V) \text{ for all }x \in X.
		\end{equation}
		Then $I$ is lower semicontinuous. Moreover, if 
		\begin{equation}
		\label{supupper}
		I(x)=\supl_{x\in V\emph{open}}-\ub(V) \text{ for all }x \in X,
		\end{equation}
		then the sequence $(\mu_n)_{n\geq 1}$ satisfies the weak LDP with rate function $I$.
	\end{prop}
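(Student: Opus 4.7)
The plan is to verify each of the two assertions separately, in both cases working directly from the definition of $I$ in~\eqref{suplower}.

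First, for lower semicontinuity, I would fix $x \in X$ and a sequence $(x_k)_{k \geq 1}$ in $X$ converging to $x$, and aim to show $\liminf_{k \to \infty} I(x_k) \geq I(x)$. For any open neighborhood $V$ of $x$, the convergence $x_k \to x$ ensures that $x_k \in V$ for all sufficiently large $k$, so the defining identity~\eqref{suplower} applied to $x_k$ gives $I(x_k) \geq -\lb(V)$. Passing to the liminf in $k$ yields $\liminf_{k \to \infty} I(x_k) \geq -\lb(V)$, and taking the supremum over all open neighborhoods $V$ of $x$ produces the required inequality. Note that this step uses only the definition of $I$, not the additional hypothesis~\eqref{supupper}.

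For the weak LDP, the lower bound on open sets is essentially tautological: given any open $V \subset X$ and any $x \in V$, the definition~\eqref{suplower} yields $I(x) \geq -\lb(V)$ directly, so $\inf_{x \in V} I(x) \geq -\lb(V)$, which rearranges into the required lower bound $\lb(V) \geq -\inf_{x \in V} I(x)$.

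The substantive content lies in the upper bound on compact sets, where I would invoke condition~\eqref{supupper} combined with a standard covering argument. Fix a compact set $K \subset X$ and a scalar $\alpha < \inf_{x \in K} I(x)$. For each $x \in K$, the inequality $I(x) > \alpha$ together with~\eqref{supupper} furnishes an open neighborhood $V_x \ni x$ satisfying $\ub(V_x) < -\alpha$. The cover $\{V_x : x \in K\}$ admits, by compactness, a finite subcover $V_{x_1}, \dots, V_{x_N}$. Subadditivity of each $\mu_n$ gives $\mu_n(K) \leq \sum_{i=1}^{N} \mu_n(V_{x_i})$, and the elementary identity $\limsup_n \frac{1}{n}\log \sum_{i=1}^{N} a_n^{(i)} = \max_{1 \leq i \leq N} \limsup_n \frac{1}{n}\log a_n^{(i)}$, valid for any finitely many non-negative sequences, then yields $\ub(K) \leq \max_i \ub(V_{x_i}) < -\alpha$. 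Letting $\alpha \nearrow \inf_{x \in K} I(x)$ concludes the argument. The only mildly delicate point is this covering step: compactness of $K$ is essential in order to extract a finite subcover, which is precisely why the conclusion cannot be promoted beyond the weak LDP without an additional hypothesis such as exponential tightness (cf.~Proposition~\ref{weakstrong}).
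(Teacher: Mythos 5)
Your proof is essentially the standard textbook argument (this is \cite[Thm.~4.1.11]{De-Ze}, which the paper cites without reproducing a proof), and both the lower bound on open sets and the finite-subcover argument for the upper bound on compact sets are carried out correctly, including the elementary $\limsup$-of-finite-sums lemma. One minor imprecision: you argue lower semicontinuity via sequences ($x_k \to x$), but $X$ is only assumed Hausdorff regular, not first-countable, so sequential lower semicontinuity is a priori weaker than lower semicontinuity. The repair is immediate and uses exactly the same observation you already made: for any $\alpha < I(x)$, the definition~\eqref{suplower} provides an open $V\ni x$ with $-\lb(V)>\alpha$, and since $V$ is also an open neighbourhood of every $y\in V$, it follows that $I(y)\geq -\lb(V)>\alpha$ for all $y\in V$; hence $\{I>\alpha\}$ is open, which is lower semicontinuity. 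Phrasing it this way avoids sequences entirely and covers the general case. With that adjustment the proposal is a correct and complete proof, matching the cited reference in both structure and substance.
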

	
	Let us observe that, both in~\eqref{suplower} and in~\eqref{supupper}, we may clearly replace the whole collection of open sets containing the point $x\in X$ by any fundamental system of open neighborhoods of $x$. 
	
	\smallskip
	Assume now that $X$ is a locally convex, Hausdorff topological vector space over $\R$, and let $X^{*}$ denote its topological dual. In case the sequence $(\mu_n)_{n\geq 1}$ satisfies the LDP on $X$ with a proper, convex rate function $I$, it is possible to give an alternative expression for the rate function itself, provided that a certain logarithmic moment generating function exists. More precisely, define the \emph{logarithmic moment generating function} of the measure $\mu_n$, for each integer $n\geq 1$, as the function $\La_{n}\colon X^{*} \to (-\infty,\infty]$ given by
	\begin{equation*}
	\La_{n}(\varphi)=\log{\int_{X}e^{\langle \varphi,x\rangle }}d\mu_n(x) \quad\text{for all }\varphi\in X^{*},
	\end{equation*}
	where $\langle \cdot,\cdot \rangle$ denotes the standard dual pairing between $X^{*}$ and $X$. The \emph{limiting logarithmic moment generating function} of the sequence $(\mu_n)_{n\geq 1}$ is then defined as
	\begin{equation*} 
	\La(\varphi)=\limsupl_{n\to\infty}\frac{1}{n}\;\La_n(n\varphi) \in (-\infty,\infty]\quad\text{for all }\varphi \in X^{*}.
	\end{equation*}
	
	Given a function $f\colon X \to (-\infty,\infty]$, not identically infinite, we define its \emph{Fenchel-Legendre transform} $f^{*}\colon X^{*}\to (-\infty,\infty]$ as 
	\begin{equation*}
	f^{*}(\varphi)=\supl_{x\in X}\{\langle \varphi,x\rangle -f(x) \} \quad\text{for all }\varphi\in X^{*}. 
	\end{equation*}
	If $g\colon X^{*}\to (-\infty,\infty]$ is a function defined on the dual space, we shall view its Fenchel-Legendre transform $g^{*}$ as a function defined just on $X$, rather than on the entire bidual $X^{**}$.  
	
	A remarkable consequence of Varadhan's integral lemma (\cite[Thm.~4.3.1]{De-Ze}), in conjunction with Fenchel-Moreau's duality theorem (\cite[Thm.~1.11]{Brezis}), is the following characterization of the rate function (cf.~\cite[Thm.~4.5.10]{De-Ze}). 
	
	\begin{thm}
		\label{momentgen}
		Let $(\mu_n)_{n\geq 1}$ be a sequence of Borel probability measures on a locally convex, Hausdorff topological vector space $X$. Assume the following:
		\begin{enumerate}
			\item  the limiting logarithmic moment generating function $\La \colon X^{*}\to (-\infty,\infty]$ of the sequence $(\mu_n)_{n\geq 1}$ is finite for every $\varphi\in X^{*}$;		 
			\item  the sequence $(\mu_{n})_{n\geq 1}$ satisfies the LDP with a proper, convex rate function $I$. 
		\end{enumerate} 
		Then the rate function $I$ is the Fenchel-Legendre transform of $\La$, namely
		\begin{equation*}
		I(x)=\supl_{\varphi \in X^{*}}\{\langle \varphi,x\rangle -\La(\varphi)  \} \text{ for every }x\in X.
		\end{equation*}
	\end{thm}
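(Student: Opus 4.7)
The plan is to establish the identity $I = \Lambda^{*}$ by combining two classical tools that are already cited just above the statement: Varadhan's integral lemma, to express $\Lambda$ as the Fenchel-Legendre transform of $I$, and the Fenchel-Moreau biconjugation theorem, to invert this relation and recover $I$.

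First, I would fix $\varphi \in X^{*}$ and show that $\Lambda(\varphi) = I^{*}(\varphi)$, i.e.\ that
\begin{equation*}
\limsup_{n\to\infty}\frac{1}{n}\log\int_{X}e^{n\langle\varphi,x\rangle}\,d\mu_{n}(x) \;=\; \sup_{x\in X}\bigl\{\langle\varphi,x\rangle - I(x)\bigr\}.
\end{equation*}
The lower bound is a direct application of the LDP lower bound: for every $x_{0}\in D_{I}$ and every $\eta>0$, one finds an open neighborhood $V$ of $x_{0}$ on which $\langle\varphi,\cdot\rangle$ oscillates less than $\eta$, restricts the integral to $V$, and applies \eqref{lb} to $V$. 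The upper bound is the delicate half: one slices $X$ according to the values of $\langle\varphi,\cdot\rangle$ and uses \eqref{ub} on each slice, but to convert compact upper bounds into a global one and control the tails, one needs a moment condition. This is precisely where assumption (1) enters: the finiteness of $\Lambda(\gamma\varphi)$ for some $\gamma>1$, which is automatic by hypothesis (1), supplies the standard exponential-integrability bound required by Varadhan's lemma. Combined with (2), Varadhan's lemma then delivers $\Lambda(\varphi)=I^{*}(\varphi)$ for every $\varphi\in X^{*}$, and in fact the $\limsup$ in the definition of $\Lambda$ is a genuine limit.

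Second, I would read off the identity $I=\Lambda^{*}$ from convex duality. A rate function is lower semicontinuous by definition, and hypothesis (2) adds that $I$ is proper and convex. The Fenchel-Moreau theorem (cited in the excerpt via \cite[Thm.~1.11]{Brezis}) then applies to $I$ viewed as a proper, convex, lower semicontinuous function on the locally convex Hausdorff space $X$, yielding $I = I^{**}$. Substituting the identity $I^{*}=\Lambda$ from the previous step gives
\begin{equation*}
I(x) \;=\; (I^{*})^{*}(x) \;=\; \Lambda^{*}(x) \;=\; \sup_{\varphi\in X^{*}}\bigl\{\langle\varphi,x\rangle - \Lambda(\varphi)\bigr\},
\end{equation*}
which is the conclusion.

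The main obstacle I anticipate is the upper bound in Varadhan's lemma: one must show that the moment hypothesis $\Lambda(\gamma\varphi)<\infty$ is strong enough to dominate the contribution to $\int e^{n\langle\varphi,x\rangle}d\mu_{n}$ coming from sets far out where the LDP upper bound \eqref{ub} is only available on compact sets. This is handled by a truncation argument combined with Hölder's inequality with exponents $\gamma$ and $\gamma/(\gamma-1)$; it is entirely standard but is the only place where real analytic work is required, the rest of the proof being pure convex-analytic formalism.
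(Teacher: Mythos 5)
Your proposal is correct and follows precisely the route the paper itself indicates: the paper does not spell out a proof but explicitly attributes the result to Varadhan's integral lemma combined with Fenchel--Moreau duality, citing Dembo--Zeitouni Thm.~4.5.10. Your outline (establish $\Lambda = I^{*}$ via Varadhan's lemma using hypothesis (1) for the moment condition, then apply Fenchel--Moreau biconjugation to the proper, convex, lower semicontinuous $I$ to obtain $I = I^{**} = \Lambda^{*}$) reproduces exactly that argument.
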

	
	Theorem~\ref{momentgen} reveals the importance of knowing \emph{a priori} the existence of the LDP with a proper, convex rate function. 
	
	\section{Existence of LDP}
	\label{exist}

	We now set out to prove our main Theorem~\ref{main}. Specifically, the objective of the present section is twofold: in Proposition~\ref{existence}, we address existence of the weak LDP, with a certain rate function, under the pattern-avoiding assumption for the semigroup generated by the support of the driving measure, while in Proposition~\ref{fullLDP} the result is upgraded to the full LDP, under the additional requirement of finiteness of some exponential moment. Convexity of the rate function, and the ensuing identification of it as a Fenchel-Legendre transform, are dealt with in Section~\ref{conv}.
	
	\smallskip
	For a start, we briefly recall the setup. Let $G_1,\dots,G_r$ be a finite collection of non-trivial finitely generated groups of subexponential growth, $G=G_1\ast\cdots \ast G_r$ their free product. For any $i\in \{1,\dots,r\}$, $S_i\subset G_i$ is a finite set of generators of $G_i$, so that $S=\bigcup_{i=1}^{r}S_i$ is a finite generating set for $G$, with associated word length $\ell\colon G \to \N$. Let $\mu$ be a probability measure on $G$, $(Y_{n})_{n\geq 0}$ a right random walk on $G$ issued from the identity with steps distributed according to $\mu$. For every integer $n\geq 1$, let $\mu_n$ be the law of the random variable $\frac{1}{n}\ell(Y_n)$.
	
	\smallskip
	Henceforth, we shall denote by $B(y,\eps)$ the open interval $(y-\eps,y+\eps)\subset \R$, for any $y\in \R$ and any $\eps>0$. Furthermore, for any positive integer $k$, we let
	\begin{equation*}
	k B(y,\eps)=\{k z: z \in B(y,\eps)\}.
	\end{equation*}
	
	We precede the statement of Proposition~\ref{existence} by two technical lemmas which, taken together, essentially allow to reduce the problem of establishing LDP in this context to a setup akin to the standard case of i.i.d.~real random variables, in which (almost-)additivity of the process can be put to good use.
	
	The first of the two lemmas allows to deduce a lower bound for the asymptotic exponential decay rate of the probabilities $\mu_n(B(x,b))$ from a uniform lower bound on a non-lacunary sequence of times.	
	
	
	
	\begin{lem}
		\label{lowerbound}
		Suppose that there exist $a>0,\gamma \in \R$, a strictly increasing sequence $(n_k)_{k\geq 1}$ of positive integers with $\lim_{k\to\infty}n_{k+1}/n_k=1$, such that 
		\begin{equation}
		\label{multiplicative}
		\mu_{n_k}(B(x,a))\geq e^{n_k\gamma} \text{ for all }k\geq 1.
		\end{equation}
		Then, for all $b>a$, 
		\begin{equation*}
		\lb(B(x,b))\geq \gamma\;.
		\end{equation*}
	\end{lem}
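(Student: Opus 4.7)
The plan is, for each $n$, to couple the walk $Y_n$ to the closest preceding step $Y_{n_k}$ in the distinguished sequence, let the walk traverse the gap $[n_k,n]$ under a bounded-step regime, and absorb the resulting error into the slightly wider window $B(x,b)$ by virtue of $n_k/n \to 1$.

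First, I fix $M > 0$ large enough that $p := \mu(\{g \in G : \ell(g) \leq M\}) > 0$, which is possible since $\mu$ is a probability measure on the countable group $G$. For each integer $n \geq n_1$, let $k = k(n)$ denote the unique index with $n_k \leq n < n_{k+1}$; the hypothesis $n_{k+1}/n_k \to 1$ forces $n_k/n \to 1$ as $n \to \infty$. Writing $Y_n = Y_{n_k} X_{n_k+1} \cdots X_n$, I would consider the event
\[
E_n := \bigl\{\tfrac{1}{n_k}\,\ell(Y_{n_k}) \in B(x,a)\bigr\} \,\cap\, \bigl\{\ell(X_i) \leq M \text{ for every } n_k < i \leq n\bigr\}.
\]
The two constituents of $E_n$ depend on disjoint blocks of the i.i.d.\ sequence $(X_i)_{i \geq 1}$ and are therefore independent, so the hypothesis~\eqref{multiplicative} combined with the choice of $M$ gives the lower bound
\[
\P(E_n) \geq e^{n_k \gamma}\, p^{n-n_k}.
\]

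The main technical point is the ensuing inclusion $E_n \subset \{\tfrac{1}{n}\ell(Y_n) \in B(x,b)\}$, valid for all $n$ sufficiently large. On $E_n$, subadditivity of $\ell$ yields $|\ell(Y_n)-\ell(Y_{n_k})| \leq M(n-n_k)$, while $\ell(Y_{n_k})/n_k < |x|+a$; combining these with the triangle inequality
\[
\left| \tfrac{1}{n}\ell(Y_n) - x \right| \leq \tfrac{|\ell(Y_n) - \ell(Y_{n_k})|}{n} + \tfrac{n-n_k}{n}\cdot\tfrac{\ell(Y_{n_k})}{n_k} + \left|\tfrac{1}{n_k}\ell(Y_{n_k}) - x\right|
\]
gives $|\tfrac{1}{n}\ell(Y_n)-x| < a + (M+|x|+a)(n-n_k)/n$ on $E_n$. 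The right-hand side is strictly below $b$ as soon as $(n-n_k)/n < (b-a)/(M+|x|+a)$, which holds for all $n$ large enough thanks to $n_k/n \to 1$.

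Combining the two estimates, $\mu_n(B(x,b)) \geq e^{n_k \gamma}\,p^{n-n_k}$ for all $n$ large enough. Taking $\tfrac{1}{n}\log$ and passing to the liminf, the term $(n_k/n)\gamma$ tends to $\gamma$ while $(n-n_k)/n\cdot \log p$ vanishes (since $\log p$ is finite), yielding the desired inequality $\liminf_n \tfrac{1}{n}\log \mu_n(B(x,b)) \geq \gamma$. The argument does not exploit the pattern-avoiding assumption or the free-product structure: the only ingredients are independence of the increments, subadditivity of $\ell$, the positivity of $p$, and the density of the sequence $(n_k)_k$.
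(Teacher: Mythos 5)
Your proof is correct and follows essentially the same route as the paper's: restrict the increments on the gap $(n_k,n]$ to a set of bounded length with positive $\mu$-probability, use subadditivity and $n_k/n \to 1$ to absorb the induced drift into the widening of the window from $a$ to $b$, and observe that the multiplicative loss $p^{\,n-n_k}$ is exponentially negligible. The paper picks a finite set $\cF$ with $\mu(\cF) > 1/2$ rather than a ball of radius $M$, but this is an immaterial difference.
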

	\begin{proof}
		Choose a finite set $\cF\subset G$ such that $\sum_{g\in \cF}\mu(g)>1/2$.
		For any $k\geq 1$, set 
		\begin{equation*}
		M_k=\sup\{\ell(x_1\cdots x_{n_{k+1}-n_k}):x_i \in \cF\cup\{e\} \text{ for all } 1\leq i\leq n_{k+1}-n_k \},
		\end{equation*}
		and notice that the upper bound $M_k\leq (n_{k+1}-n_k)M_1$ holds by subadditivity of $\ell$. 
		
		Now let $N\geq n_1$ be arbitrary; there exists a unique $k=k(N)\geq 1$ such that\linebreak $n_{k}\leq N<n_{k+1}$. 
		As $b-a>0$, the assumption $n_{k+1}/n_k\rightarrow 1$ implies that there exists $k_0\in \N$ such that 
		\begin{equation*}
		\{\ell(Y_{n_k})\in n_kB(x,a) \}\cap \{X_{n_k+1}\in \mathcal{F},\dots,X_N\in \mathcal{F} \}\subset \{\ell(Y_{N})\in NB(x,b) \} \text{ for all }k\geq k_0;
		\end{equation*}
		this follows from the double inequality $|\ell(g)-\ell(h)|\leq \ell(gh)\leq \ell(g)+\ell(h)$, holding for every $g,h \in G$.
		Now, if $k\geq k_0$ and $N\in\{n_{k},\dots,n_{k+1}-1\}$, we may estimate
		\begin{equation*}
		\begin{split}
		\mu_N(B(x,b))&=\P(\ell(Y_N)\in nB(x,b))\geq \P(\ell(Y_{n_k})\in n_kB(x,a),X_{n_k+1}\in \cF,\dots,X_{N}\in \cF)\\
		&=\mu_{n_k}(B(x,a))\mu(\cF)^{N-n_k}\geq e^{n_k\gamma}2^{-(n_{k+1}-n_k)}\;,
		\end{split}
		\end{equation*}
		the last two inequalities being given, respectively, by independence and stationarity of the process $(X_n)_{n\geq1}$, and by the assumption of the lemma. Taking the logarithm and dividing by $N$, we obtain
		\begin{equation*}
		\frac{1}{N}\log{\mu_N(B(x,b))}\geq \frac{n_k}{N}\gamma-\frac{n_{k+1}-n_k}{N}\log{2}\;.
		\end{equation*}
		Taking the inferior limit as $N\to\infty$ on both sides, and observing that the assumption on $(n_k)_k$ implies $\lim_{N\to\infty}n_{k(N)}/N=1$, we achieve the proof.
	\end{proof}
	
	The next lemma expresses the possibility of restricting the random walk to subsets on which the length function $\ell$ is almost additive, without losing consistently on the exponential decay rate of the probabilities involved.
	
	For every $T\in \R_{\geq 0}$, set $\theta_{T}=\sup\{|B^{G_i}(T)|:i=1,\dots,r \}$.
	\begin{lem}
		\label{weaklengthadd}
		Let $\nu$ be a probability measure on $G$, $\cT\subset G$ a finite subset avoiding patterns of type size $D$ for a certain integer $D>0$. Set $L\coloneqq \sup\{\ell(g):g\in \cT\}$. Then, for any $T\in \R_{\geq 0}$ and any set $F\subset B^{G}(T)\setminus\{e\}$,
		there exist a subset $A\subset F$ with $\nu(A)\geq (r\theta_{T})^{-2D}\nu(F)$ and an element $g\in \mathcal{T}$ such that at least one of the following holds:
		\begin{enumerate}
			\item for any integer $k\geq 1$ and any collection $g_1,\dots,g_k\in A$
			\begin{equation*}
			\ell(g_1\cdots g_k)\geq \ell(g_1)+\cdots +\ell(g_k)-k(2LD)\;;
			\end{equation*} 
			\item for any integer $k\geq 1$ and any collection $g_1g,\dots,g_kg\in A\cdot g$
			\begin{equation*}
			\ell(g_1g\cdots g_kg)\geq \ell(g_1)+\cdots +\ell(g_k)-k(2LD)\;.
			\end{equation*}
		\end{enumerate}
	\end{lem}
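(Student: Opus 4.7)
The plan is to produce $A$ by a pigeonhole argument on the ``boundary profile'' of elements of $F$, and then to exploit the pattern-avoidance property of $\cT$ to select $g \in \cT$ whose structure prevents deep cancellation in iterated products. To begin, I associate to each non-identity $h \in G$, with reduced-word decomposition $h = x_1 \cdots x_m$, the pair $(\omega_s(h), \omega_e(h))$ consisting of its first $D$ and last $D$ letters (interpreted in line with the paper's $\lfloor m/2 \rfloor$-convention when $m < 2D$, by including small-type-size elements in an auxiliary class keyed by the full reduced word). Each profile letter is a non-identity element of some factor $G_i$ lying in $B^{G_i}(T)$, giving at most $r\theta_T$ choices per letter, hence at most $(r\theta_T)^{2D}$ profiles in total. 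Partitioning $F$ by profile and applying the pigeonhole principle produces $A \subseteq F$ with $\nu(A) \geq (r\theta_T)^{-2D}\nu(F)$, all of whose elements share a common leading pattern $\omega_s$ and trailing pattern $\omega_e$.

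Next, I apply the pattern-avoidance of $\cT$ to the type-size-$D$ reduced word $\omega_e^{-1}$: this produces $g \in \cT$ such that $g$ neither starts with $\omega_e^{-1}$ nor ends with $\omega_e$. These two conditions are exactly what is needed to control cancellation at each junction of the product $g_1 g \cdot g_2 g \cdots g_k g$ appearing in case $(2)$. Recall that the reduction in a free product at a single junction $u \cdot v$ proceeds by identifying the maximal \emph{exact cancellation depth} $j^*$, i.e.\ the largest $j$ for which the last $j$ letters of $u$ pair letter-wise to the identity with the first $j$ letters of $v$, followed by at most one \emph{combination} step in case the subsequent unmatched pair lies in the same factor of the free product. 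The choice of $g$ above forces $j^* \leq D - 1$ at every junction involving $g$, so that the cancelled letters on $g$'s side have total word length at most $\ell(g) \leq L$; the matching cancelled letters on the other side have the same total length by inversion, bounding the exact contribution to the per-junction loss by $2L$. The combination step, if triggered, contributes at most $2 \min(\ell(x), \ell(y)) \leq 2L$, because one of the two boundary letters is a letter of $g$. Summing over the $2k-1$ interfaces and absorbing the non-negative $k\ell(g)$ on the left side yields the claimed inequality in case $(2)$. Case $(1)$ is invoked in the complementary scenario where the profile itself already forces bounded cancellation in direct concatenations of $A$-elements, for instance when $\omega_s$ and $\omega_e^{-1}$ are structurally incompatible as reduced words.

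The principal obstacle is the careful bookkeeping for elements of small type size, $m < 2D$, where the paper's ``starts with''/``ends with'' definitions become partially vacuous: this is resolved by treating such elements with a signature equal to their full reduced word, which inflates the partition count only by a multiplicative constant absorbed in $(r\theta_T)^{2D}$. A related technical point is verifying that the combination step is controlled uniformly by $L$ rather than by the (potentially much larger) maximal letter length in $\omega_e$; this rests on the fact, itself guaranteed by the pattern-avoidance condition, that cancellation never consumes $g$ entirely on either side. The slack factor of $D$ in the target bound $2LD$ is generous enough to comfortably absorb these technicalities, and the dichotomy between cases $(1)$ and $(2)$ is dictated by whether the pair $(\omega_s,\omega_e)$ already enforces small cancellation in $A$-products or requires interposition of the separator $g$.
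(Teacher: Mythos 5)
Your plan resembles the paper's in broad outline (pigeonhole on boundary letters, then pattern-avoidance to obtain a separator $g$), but the single-pass pigeonhole fails to produce two pieces of information that the paper's iterated refinement is specifically designed to secure, and both gaps are fatal.

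First, case~(1) of your dichotomy is broken because you do not control the \emph{lengths} of the cancelled letters, only their \emph{number}. Your set $A$ has common leading and trailing profiles $\omega_s, \omega_e$, and you argue that if $\omega_s \neq \omega_e^{-1}$ then exact cancellation in a product $g_1 g_2$ stops after at most $D-1$ letters. That is true of the \emph{count} of cancelled letters, but each of these letters can have word length as large as $T$, so the total length lost at a single junction can be of order $DT$, which is unbounded in terms of $L$ and $D$. The paper's proof avoids this precisely by refining one layer at a time and branching on whether $\ell(y_i) \leq L$ or $\ell(y_i) > L$: the iteration only continues past layer $i$ when $\ell(y_i) \leq L$ and $z_i = y_i^{-1}$, so by the time the process halts the potentially-cancelling letters are all guaranteed to have length at most $L$. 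Your partition by profile alone cannot reproduce this length control.

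Second, in case~(2) you apply pattern-avoidance to $\omega_e^{-1}$ and obtain $g$ that does not start with $\omega_e^{-1}$ and does not end with $\omega_e$. The first condition controls the junction $g_i \cdot g$. For the junction $g \cdot g_{i+1}$, what you need is that $g$ does not end with $\omega_s^{-1}$ (so that cancellation cannot consume $g$'s suffix and cascade into $g_{i+1}$). You do not have that condition unless $\omega_e = \omega_s^{-1}$, and a pigeonhole on the first $D$ and last $D$ letters of $F$ does not force that relationship. The paper's iteration, by contrast, produces a set $E_D$ all of whose elements start with $y_1\cdots y_D$ and end with $z_D y_{D-1}^{-1}\cdots y_1^{-1}$ with $z_D = y_D^{-1}$ in the surviving case, so the trailing profile is automatically the inverse of the leading one, and a single application of pattern-avoidance to $y_1\cdots y_D$ controls both junctions. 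Your claim that ``cancellation never consumes $g$ entirely on either side \ldots\ is itself guaranteed by the pattern-avoidance condition'' is therefore not correct under your construction of $A$: pattern-avoidance, as you invoke it, only protects one side. To repair the argument along these lines you would have to replace the one-shot pigeonhole by something that both bounds the lengths of the boundary letters and enforces the $\omega_s = \omega_e^{-1}$ symmetry, which is exactly what the paper's step-by-step refinement does.
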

	Observe that $T/\log{\theta_T}\overset{T\to\infty}{\longrightarrow}\infty$ due to the subexponential growth of $G_1,\dots,G_r$; as a consequence, the factor $(r\theta_{T})^{-2D}$, quantifying the maximal loss in probability, is negligible on an exponential scale (cf.~the proof of Proposition~\ref{existence}).
	\begin{proof}
		The proof consists of a repeated application of the union bound for $\nu$, in order to extract various subsets of $F$ with predetermined letters in their reduced-word expression.
		
		To begin with, there exist $(i_1,j_1)\in \{1,\dots,r\}^{2}$ and $F_1\subset F$ such that $\nu(F_1)\geq r^{-2}\nu(F)$ and, for any $g\in F_1$, the first letter of $g$ is in $G_{i_1}$ and the last one is in $G_{j_1}$. If $i_1\neq j_1$, then $\ell(g_1\cdots g_k)=\ell(g_1)+\cdots \ell(g_k)$ for any $g_1,\dots,g_k\in F_1$, so that $A=F_1$ fulfils the statement. If $i_1=j_1$, we might choose a subset $E_1\subset F_1$ and elements $y_1,z_1\in G_{i_1}$ such that $\nu(E_1)\geq \theta_{T}^{-2}\nu(F_1)$ and, for each $g\in E_1$, the first letter of $g$ is $y_1$ and the last one is $z_1$. We distinguish three cases.
		\begin{itemize}
			\item[--] Suppose $\ell(y_1)>L,\ell(z_1)>L$; if $g$ is chosen in $\cT\setminus G_{i_1}$, it is easy to check that $\ell(g_1g\cdots g_kg)\geq \ell(g_1)+\cdots +\ell(g_k)-k(2L)$ for any $g_1g,\dots,g_kg\in E_1\cdot g$, so that we may set $A=E_1$ and conclude.
			\item[--] If just one between $y_1$ and $z_1$ has length exceeding $L$, or alternatively if	\linebreak $\ell(y_1)\leq L,\ell(z_1)\leq L$ and $z_1\neq y_1^{-1}$, then $\ell(g_1\cdots g_k)\geq \ell(g_1)+\cdots +\ell(g_k)-k(2L)$ for any $g_1,\dots,g_k\in E_1$; again, setting $A=E_1$ allows to conclude.  
		\end{itemize}
		The only remaining case is $\ell(y_1)\leq L,z_1=y_1^{-1}$. We then carry out the same procedure, selecting $F_2\subset E_1$, $(i_2,j_2)\in \{1,\dots,r\}^{2}$, with $\nu(F_2)\geq r^{-2}\mu(E_1)$ and so that, for each $g\in F_2$, the second letter of $g$ is in $G_{i_2}$ and the second-to-last one is in $G_{j_2}$. If $i_2\neq j_2$, then\linebreak $\ell(g_1\cdots g_k)\geq \ell(g_1)+\cdots +\ell(g_k)-k(2L)$ for any $g_1,\dots,g_k \in F_2$. If instead $i_2= j_2$, then choose $E_2\subset F_2$ and elements $y_2,z_2\in G_{i_2}$ so that $\nu(E_2)\geq \theta_T^{-2}\nu(F_2)$ and, for each $g\in E_2$, the second letter of $g$ is $y_2$ and the second-to-last one is $z_2$. Notice that, by assumption, $\cT$ is not contained in any conjugate of any factor $G_i$ by any word $\omega$ of type size not exceeding $D$. Therefore, unless $\ell(y_2)\leq L$ and $z_2=y_2^{-1}$, we can set $A=E_2$ and conclude as before. 
		
		Proceeding in this way, we select, if needed at each successive step, nested subsets\linebreak $E_2\supset F_3\supset E_3\supset\cdots \supset E_{D}$. The set $E_D$ has the property that $\nu(E_D)\geq (\theta_{T})^{-2}\nu(F_{D})\geq (r\theta_T)^{-2D}\nu(F)$; furthermore, there are letters $y_3,\dots,y_{D},z_{D}$ such that, for any $g\in E_{D}$, the reduced-word expression of $g$ is $y_1\cdots y_{D}\cdots z_{D}y_{D-1}^{-1}\cdots y_{1}^{-1}$. It remains to deal with three possibilities, as above.
		\begin{itemize}
			\item[--] Suppose $\ell(y_D)>L,\ell(z_D)>L$, and set $\omega=y_{1}\cdots y_{D-1}$. If $g$ is chosen in\linebreak $\mathcal{T}\setminus \omega G_{i_{D}}\omega^{-1}$, where $G_{i_D}$ is the factor to which both $y_D$ and $z_D$ belong\footnote{To select an element $g$ of this sort, concatenate any letter $y'_{D}$ with $\omega$, in such a way that $\omega y'_{D}$ is a reduced word; using that $\cT$ avoids patterns of type size $D$, pick $g\in \mathcal{T}$ not starting with $\omega y'_{D}$ nor ending with $(\omega y'_{D})^{-1}$.}, then\linebreak $\ell(g_1g\cdots g_kg)\geq \ell(g_1)+\cdots +\ell(g_k)-k(2DL)$ for any $g_1g,\dots,g_kg\in E_D\cdot g$, so that we may set $A=E_D$ and conclude.
			\item[--] If just one between $y_D$ and $z_D$ has length exceeding $L$, or alternatively if\linebreak $\ell(y_D)\leq L,\ell(z_D)\leq L$ and $z_D\neq y_D^{-1}$, then $\ell(g_1\cdots g_k)\geq \ell(g_1)+\cdots +\ell(g_k)-k(2DL)$ for any $g_1,\dots,g_k\in E_1$; conclude by setting $A=E_1$.
			\item[--] Finally, assume $\ell(y_1)\leq L,z_D=y_{D}^{-1}$, and choose $g\in \mathcal{T}$ not starting with $y_1\cdots y_D$ nor ending with $(y_1\cdots y_D)^{-1}$. Then again $\ell(g_1g\cdots g_kg)\geq \ell(g_1)+\cdots +\ell(g_k)-k(2DL)$ for any $g_1g,\dots,g_kg\in E_D\cdot g$. The set $A=E_{D}$ satisfies the conclusion.
		\end{itemize}
		The argument is finalized.
	\end{proof} 
	If a set $A$ (resp.~$A\cdot g$) satisfies the conclusion of Lemma~\ref{weaklengthadd}, then we say that $A$ (resp.~$A\cdot g$) has the \emph{weak length additivity property} of order $LD$.
	
	\smallskip
	We are now in a position to prove existence of the weak LDP.
	\begin{prop}
		\label{existence}
		Let $G,S,\ell,\mu$ be as above, $(Y_n)_{n\geq 0}$ a right random walk on $G$ issued from the identity with increments distributed according to $\mu$. Suppose that the support of $\mu$ generates a pattern-avoiding semigroup $\Ga\subset G$. Then the sequence of $\R$-valued random variables $\bigl(\frac{1}{n}\ell(Y_n)\bigr)_{n\geq 1}$ satisfies the weak LDP with a rate function $I\colon \R_{\geq 0}\to [0,\infty]$.
	\end{prop}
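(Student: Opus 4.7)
The plan is to invoke the weak-LDP criterion in Proposition~\ref{criterion}. Setting $I(x) = \sup_{x \in V \text{ open}} - \liminf_n \frac{1}{n}\log \mu_n(V)$, it suffices to verify that the same function is obtained with $\limsup$ in place of $\liminf$. Since the intervals $\bigl(B(x, \epsilon)\bigr)_{\epsilon > 0}$ constitute a fundamental system of open neighborhoods of any $x \in \R_{\geq 0}$ in the subspace topology, the verification reduces to showing that, for every $x \in \R_{\geq 0}$ and every pair $0 < a < b$,
\begin{equation*}
\liminf_{n \to \infty} \frac{1}{n}\log \mu_n(B(x,b)) \;\geq\; \limsup_{n \to \infty} \frac{1}{n}\log \mu_n(B(x,a)).
\end{equation*}
This is the pattern-avoiding analogue of the (almost-)supermultiplicative input used in Lanford's proof of Cram\'er's theorem, the lack of additivity of $\ell$ on $G$ being compensated by the weak additivity provided by Lemma~\ref{weaklengthadd}.

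To establish the inequality, fix $\eta > 0$, let $\gamma$ denote the right-hand side above, and pick $n$ along the $\limsup$-subsequence so that $\frac{1}{n}\log \mu_n(B(x,a)) \geq \gamma - \eta$. Select a finite pattern-avoiding subset $\mathcal{T} \subset \Gamma$ (available by the closing observation of Section~\ref{patternavoid}), denote by $L$ its maximal word length and by $D$ the associated type size, and apply Lemma~\ref{weaklengthadd} to the measure $\nu = \mathrm{Law}(Y_n)$, the radius $T = n(x+a)$, and the set $F = \{g \neq e : \ell(g)/n \in B(x,a)\}$. This produces a subset $A \subset F$ with $\nu(A) \geq (r\theta_T)^{-2D}\nu(F)$ enjoying the weak length additivity property of order $LD$. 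Crucially, the subexponential-growth assumption on $G_1, \ldots, G_r$ forces $\log \theta_T = o(T) = o(n)$, so the multiplicative loss $(r\theta_T)^{-2D}$ is negligible on the exponential scale.

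In case (1) of Lemma~\ref{weaklengthadd}, I partition the trajectory $Y_{kn}$ into $k$ i.i.d.\ blocks $W_1, \ldots, W_k \sim Y_n$ and restrict to the event $\{W_i \in A \text{ for all } i\}$, which has probability at least $\nu(A)^k$. On this event $\ell(Y_{kn}) \geq \sum_i \ell(W_i) - 2kLD$ (and trivially $\ell(Y_{kn}) \leq \sum_i \ell(W_i)$), so that $\ell(Y_{kn})/(kn) \in B(x, a+\delta)$ for any preset $\delta > 0$, provided $n$ is large. In case (2), I write the inserted element $g \in \mathcal{T}$ as a product $g = s_1 \cdots s_p$ of elements of $\supp \mu$ (possible since $\mathcal{T} \subset \Gamma$), partition the trajectory into blocks of length $n+p$ whose first $n$ steps produce a member of $A$ and whose last $p$ steps reproduce the word $s_1 \cdots s_p$ exactly, and argue analogously. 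In both cases, letting $N$ denote the chunk length, one derives
\begin{equation*}
\frac{1}{kN}\log \mu_{kN}(B(x, a+\delta)) \;\geq\; \gamma - 2\eta \qquad \text{for every } k \geq 1,
\end{equation*}
once $n$ is large enough to absorb, after division, the constants $\log \prod_i \mu(s_i)$, $2D\log(r\theta_T)$ and $2LD$.

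Since the sequence $(kN)_{k \geq 1}$ satisfies $(k+1)N/(kN) \to 1$, Lemma~\ref{lowerbound} now gives $\liminf_n \frac{1}{n}\log \mu_n(B(x, b')) \geq \gamma - 2\eta$ for every $b' > a + \delta$; letting $\eta, \delta \to 0$ completes the verification of the hypotheses of Proposition~\ref{criterion}. The main technical obstacle I anticipate is the second case of Lemma~\ref{weaklengthadd}: elements of $A$ cannot be concatenated freely, and the inserted element $g$ has to be manufactured through actual random-walk increments, which is precisely where the assumption $\mathcal{T} \subset \Gamma$ (rather than merely $\mathcal{T} \subset G$) becomes indispensable. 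A secondary technicality is the boundary point $x = 0$, at which $\mu_n(B(0,a))$ may be dominated by the contribution of $\{Y_n = e\}$; this can be dealt with by separating the contributions of $Y_n = e$ and $Y_n \neq e$ and running the above argument only on the latter.
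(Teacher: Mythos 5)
Your proposal is correct and follows essentially the same line of argument as the paper's proof: reduce to the criterion of Proposition~\ref{criterion} on basic open intervals, restrict $Y_n$ via Lemma~\ref{weaklengthadd} to a set on which $\ell$ is weakly additive (at negligible exponential cost thanks to subexponential growth), block-decompose the trajectory (with or without inserted copies of an element $g\in\mathcal{T}$), and feed the resulting non-lacunary sequence of lower bounds into Lemma~\ref{lowerbound}. The only differences are cosmetic: the paper argues by contradiction while you argue directly; the paper dispatches $x=0$ immediately by observing that $\P(\ell(Y_n)<n\epsilon)$ is supermultiplicative so $\lim_n\frac1n\log\mu_n(B(0,\epsilon))$ exists (which is simpler than your proposed split of $\{Y_n=e\}$ and $\{Y_n\neq e\}$, though that also works); and the paper reaches $g$ in $t(g)$ steps with probability $p(g)=\P(Y_{t(g)}=g)$ rather than prescribing a specific word $s_1\cdots s_p$, but the two are interchangeable.
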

	
	
	

	\begin{proof}	
		We rely on the criterion phrased in Proposition~\ref{criterion}, checking that the condition expressed therein is satisfied. Arguing by contradiction, suppose that there exists $x\in \R_{\geq 0}$ such that 
		\begin{equation}
		\label{firstcontr}
		I(x)\neq\supl_{x\in V\text{open}}-\ub(V).
		\end{equation}
		As the left-hand side of~\eqref{firstcontr} always dominates the right-hand side by definition, this yields
		\begin{equation}
		\label{secondcontra}
		I(x)>\supl_{x\in V\text{open}}-\ub(V).
		\end{equation}
		Notice first that, necessarily, $x$ is strictly positive; indeed, for $x=0$ the criterion in Proposition~\ref{criterion} is trivially satisfied, as $\lim_{n}\frac{1}{n}\log{\mu_n}(B(0,\eps))$ exists in $[-\infty,0]$ for every $\eps>0$, by subadditivity of $\ell$.

		As a consequence of~\eqref{secondcontra}, there exist $\delta,\eta>0$ such that 
		\begin{equation}
		\label{contr}
		-\lb(B(x,\delta))>\biggl(\supl_{\rho>0}-\ub(B(x,\rho))\biggr) +\eta\;.
		\end{equation}
		Fix a positive real number $\rho$ such that $\rho<\inf\{x,\delta\}$; then, by~\eqref{contr}, there are infinitely many positive integers $n_j,j\geq 1$, for which
		\begin{equation}
		\label{secondcontr}
		\lb(B(x,\delta))<\frac{1}{n_j}\log{\mu_{n_j}((B(x,\rho)))}\;\; -\eta\;.
		\end{equation}
		For notational simplicity, denote by
		\begin{equation}
		\label{notation}
		\alpha=\lb(B(x,\delta)),\; \beta_j=\frac{1}{n_j}\log{\mu_{n_j}((B(x,\rho)))} \text{ for every }j\geq 1.
		\end{equation} 
		We claim that, if $j$ is taken to be sufficiently large, the inequality $\alpha\geq \beta_j-\eta$ holds, which is opposite to what is given by~\eqref{secondcontr}, giving the desired contradiction. 

		The hypothesis on the semigroup $\Ga$ ensures the existence of a finite subset $\mathcal{T}\subset \Ga\setminus\{e\}$ with the following property: there exists an integer $D>0$ such that, for any reduced word $\omega$ of type size $D$ in $G$, we can find $g\in \mathcal{T}$ not starting in $\omega$ and not ending in $\omega^{-1}$ (cf.~Section~\ref{patternavoid}). For any $g\in \mathcal{T}$, choose $t(g)\in \N_{\geq 1}$ and $p(g)\in \R_{>0}$ such that the random walk attains $g$ in $t(g)$ steps with probability $p(g)$, that is $\P(Y_{t(g)}=g)=p(g)$. 
		Define $L=\sup\{\ell(g):g\in \mathcal{T}\}$, $p=\inf\{p(g):g\in \mathcal{T}\}$, $t=\sup\{t(g):g\in \mathcal{T} \}$. Keeping with our earlier notation, let
		\begin{equation*} \theta_{T}=\sup\{|B^{G_i}(T)|:i=1,\dots,r \} \quad \text{for any } T\in \R_{\geq 0}.
		\end{equation*}
		Now choose an integer $j_0\geq 1$ so that 
		\begin{equation*}
		n_{j_0}\geq \sup\bigl\{ 
		(2LD+tx)(\delta-\rho)^{-1},\eta^{-1}\bigl(2D(\log{r}+\log{\theta_{n_{j_0}(x+\rho)}})-\log{p}\bigr) \bigr\}\;;
		\end{equation*}
		this exists since $T/\log{\theta_{T}}\overset{T\to\infty}{\longrightarrow}\infty$ by the subexponential-growth assumption on the factors $G_1,\dots,G_r$. 
		Define $F=\{g \in G: \ell(g)\in n_{j_0}B(x,\rho) \}$, so that  $e^{\beta_{j_0}n_{j_0}}=\P(Y_{n_{j_0}}\in F)$ by~\eqref{notation}.
		Notice also that $F$ does not contain the identity as $n_{j_0}(x-\rho)>0$.
		Applying Lemma~\ref{weaklengthadd}, with $\nu$ being the law of the random variable $Y_{n_{j_0}}$, we can manufacture a set $A\subset F$ and an element $g\in \mathcal{T}$ such that
		\begin{itemize}
			\item[--] $\P(Y_{n_{j_0}}\in A)\geq (r\theta_{n_{j_0}(x+\rho)})^{-2D}e^{\beta_{j_0}n_{j_0}}$ and
			\item[--] either $A$ or $A\cdot g$ has the weak length additivity property of order $LD$.
		\end{itemize}
		We distinguish two cases.
		\begin{itemize}
			\item First case: $A$ has the weak length additivity property of order $LD$.
			
			Define the sequence $n_k=kn_{j_0},k\geq 1$. Since $n_{j_0}\geq 2LD(\delta-\rho)^{-1}$, there exists $\rho'<\delta$ such that $\rho'-\rho\geq 2n_{j_0}^{-1}LD$. For such a choice of $\rho'$, we have that\linebreak $\ell(g_1\cdots g_k)\in n_{k}B(x,\rho')$ whenever $g_1,\dots,g_k$ are chosen from $A$. 
			Therefore, we may estimate, for each $k\geq 1$,
			\begin{equation*}
			\begin{split}
			\quad\quad\;\mu_{n_k}(B(x,\rho'))&=\P(\ell(Y_{n_k})\in n_kB(x,\rho'))\geq \P(X_1\cdots X_{n_{j_0}}\in A,\dots,X_{n_{k-1}+1}\cdots X_{n_k}\in A)\\
			&\geq \P(Y_{n_{j_0}}\in A)^{k}\geq \bigr((r\theta_{n_{j_0}(x+\rho)})^{-2D}e^{\beta_{j_0}n_{j_0}}\bigr)^{k}\geq  e^{n_k(\beta_{j_0}-\eta)}\;,
			\end{split}
			\end{equation*}
			where the middle inequality is given by independence and stationarity\linebreak of the process $(X_{n})_{n\geq 1}$, while the last one comes from our choice\linebreak $n_1=n_{j_0}\geq 2D\eta^{-1}(\log{r}+\log{\theta_{n_{j_0}(x+\rho)}})$. Lemma~\ref{lowerbound} gives
			\begin{equation*} \alpha=\liminf_{n\to\infty}\frac{1}{n}\log{\mu_nB(x,\delta)}\geq \beta_{j_0}-\eta\;,
			\end{equation*}
			as desired.
			\item Second case: $A\cdot g$ has the weak length additivity property of order $LD$.
			
			Define the sequence $n_k=k(n_{j_0}+t(g)),k\geq 1$. Since $n_{j_0}\geq (\delta-\rho)^{-1}(2LD+tx)$,\linebreak it is possible to select $\rho'<\delta$ so that $\rho'-\rho\geq n_{j_0}^{-1}(2LD+tx)$. It is straightforward to verify that this choice of $\rho'$ ensures $\ell(g_1g\cdots g_kg)\in n_kB(x,\rho')$ for every $g_1,\dots,g_k\in A$. 
			
			As before, we may thus estimate
			\begin{equation*}
			\begin{split}
			\quad\quad\;\mu_{n_k}(B(x,\rho'))&=\P(\ell(Y_{n_k})\in n_kB(x,\rho'))\geq \P(X_1\cdots X_{n_{j_0}}\in A,X_{n_{j_0}+1}\cdots X_{n_{j_0}+t(g)}=g)^{k}\\
			&\geq \P(Y_{n_{j_0}}\in A)^{k}p(g)^{k}\geq \bigr((r\theta_{n_{j_0}(x+\rho)})^{-2D}e^{\beta_{j_0}n_{j_0}}\bigr)^{k}p^{k}\geq  e^{n_k(\beta_{j_0}-\eta)} 
			\end{split}
			\end{equation*}
			for each $k\geq 1$. This time, the last inequality stems from our choice\linebreak $n_{j_0}\geq \eta^{-1}\bigl(2D(\log{r}+\log{\theta_{n_{j_0}(x+\rho)}})-\log{p}\bigr)$. Applying Lemma~\ref{lowerbound} once more, we deduce again that $\alpha\geq \beta_{j_0}-\eta$.
		\end{itemize}
		The proof is concluded.
	\end{proof}
	
	\begin{prop}
		\label{fullLDP}
		In the setting of Proposition~\ref{existence}, assume further that $\mu$ has a finite exponential moment. Then the rate function $I$ governing the weak LDP for the sequence $\bigl(\frac{1}{n}\ell(Y_n)\bigr)_{n\geq 1}$ is proper, and the sequence $\bigl(\frac{1}{n}\ell(Y_n)\bigr)_{n\geq 1}$ satisfies the full LDP with rate function $I$.
	\end{prop}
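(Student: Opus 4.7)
The plan is to deduce the proposition as a direct application of Proposition~\ref{weakstrong}, combined with the weak LDP already granted by Proposition~\ref{existence}. Concretely, it suffices to establish exponential tightness of the sequence $\bigl(\frac{1}{n}\ell(Y_n)\bigr)_{n\geq 1}$ on $\R_{\geq 0}$; once this is in hand, Proposition~\ref{weakstrong} immediately upgrades the weak LDP to the full LDP and asserts that the rate function $I$ is proper.

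To prove exponential tightness, I would exploit the finite exponential moment hypothesis together with the subadditivity of $\ell$. Fix $\tau>0$ such that $C\coloneqq \int_{G}e^{\tau\ell(g)}\,d\mu(g)<\infty$. Since $\ell(Y_n)\leq \ell(X_1)+\cdots+\ell(X_n)$ and the $X_i$ are i.i.d.~with law $\mu$, the elementary estimate
\begin{equation*}
\E\bigl[e^{\tau \ell(Y_n)}\bigr]\leq \E\bigl[e^{\tau(\ell(X_1)+\cdots+\ell(X_n))}\bigr]=C^{n}
\end{equation*}
holds for every $n\geq 1$. A Markov-type inequality then yields, for every $M>0$,
\begin{equation*}
\P\bigl(\tfrac{1}{n}\ell(Y_n)>M\bigr)=\P\bigl(e^{\tau \ell(Y_n)}>e^{\tau nM}\bigr)\leq C^{n}e^{-\tau nM},
\end{equation*}
so that $\limsup_{n\to\infty}\frac{1}{n}\log{\P\bigl(\tfrac{1}{n}\ell(Y_n)>M\bigr)}\leq \log{C}-\tau M$.

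Given an arbitrary $\alpha>0$, one now chooses $M>\tau^{-1}(\alpha+\log{C})$ and observes that the compact set $K=[0,M]\subset \R_{\geq 0}$ satisfies $\ub(\R_{\geq 0}\setminus K)<-\alpha$, which is the defining condition of exponential tightness. Invoking Proposition~\ref{weakstrong} together with the weak LDP supplied by Proposition~\ref{existence} completes the proof. The argument is essentially routine and involves no substantial obstacle beyond ensuring the standard Chernoff-type bound is available, which is precisely what the finite exponential moment assumption secures; the main conceptual content has already been absorbed into Proposition~\ref{existence} and Proposition~\ref{weakstrong}.
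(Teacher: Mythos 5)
Your argument is correct and coincides with the paper's own proof: both reduce to exponential tightness via Proposition~\ref{weakstrong}, bound $\E[e^{\tau\ell(Y_n)}]\leq C^n$ by subadditivity of $\ell$ together with independence and stationarity, apply Markov's inequality, and let $M\to\infty$.
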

	\begin{proof}
		As before, we let $\mu_n$ be the law of the random variable $\frac{1}{n}\ell(Y_n)$, for every $n\geq 1$. In light of Proposition~\ref{weakstrong}, it suffices to show that the sequence $(\mu_n)_{n\geq 1}$ is exponentially tight. 
		By the assumption, there exists a real number $\tau>0$ such that $C\coloneqq\int_{G}\exp{(\tau\ell(g))}d\mu(g)<\infty$.
		
		Fix $M>0$. Then
		\begin{equation*}
		\mu_n([0,M]^{\mathsf{c}})=\P(\ell(Y_n)> nM)=\P(\exp{\tau\ell(Y_n)}>\exp{\tau nM})\leq \frac{\E[\exp{\tau\ell(Y_n)}]}{\exp{\tau nM}}\;,
		\end{equation*}
		the last upper bound being given by Markov's inequality.  Subadditivity of the length function $\ell$, together with independence and stationarity of the process $(X_n)_{n\geq1 }$, gives
		\begin{equation*}
		\begin{split}
		\E[\exp(\tau\ell(Y_n))]&\leq\E\biggl[\exp{\tau\biggl(\sum_{i=1}^{n}\ell(X_n)\biggr)}\biggr]=\E\biggl[\prod_{i=1}^{n}\exp{\tau\ell(X_i)}\biggr]=\prod_{i=1}^{n}\E[\exp{\tau\ell(X_i)}]\\
		&=(\E[\exp{\tau\ell(X_1)}])^{n}=\biggl(\int_{G}e^{\tau \ell(g)}d\mu(g)\biggr)^{n}=C^{n} \;\text{ for every }n\geq 1.
		\end{split}
		\end{equation*}
		Combining the previous two estimates, taking the logarithm and dividing by $n$, we obtain $\frac{1}{n}\log{\mu_n([0,M]^{\mathsf{c}})}\leq \log{C}-\tau M$.
		As a result,
		\begin{equation*}
		\limsupl_{n\to\infty}\frac{1}{n}\log{\mu_n([0,M]^{\mathsf{c}})}\overset{M\to\infty}{\longrightarrow}-\infty\;,
		\end{equation*}
		which establishes exponential tightness of the sequence $(\mu_n)_{n\geq 1}$.
	\end{proof}
	
	\section{Convexity of the rate function}
	\label{conv}
	
	The chief aim of this section is the proof of convexity of the rate function associated to the LDP for the sequence $\bigl(\frac{1}{n}\ell(Y_n)\bigr)_{n\geq 1}$. In the last part, we gather some further properties of the rate function, and deduce its characterization expressed in the last sentence of Theorem~\ref{main}. As in the foregoing section, we let $\mu_n$ denote the law of the random variable $\frac{1}{n}\ell(Y_n)$, for $n\geq 1$.
	
	Recall that, if $X$ is a real vector space, a function $f\colon X \to (-\infty,+\infty]$ is convex if, for any $x_1,x_2 \in X$ and any $\lambda\in  [0,1]$,
	\begin{equation}
	\label{convexity}
	f(\lambda x_1+(1-\lambda)x_2)\leq \lambda f(x_1)+(1-\lambda)f(x_2)\;;
	\end{equation} 
	the function $f$ is mid-point convex if the previous inequality holds for $\lambda=1/2$, that is if
	\begin{equation*}
	f\biggl(\frac{1}{2} x_1+\frac{1}{2}x_2\biggr)\leq \frac{1}{2} f(x_1)+\frac{1}{2}f(x_2)
	\end{equation*} 
	for all $x_1,x_2\in X$. 
	
	Suppose now $X$ is a topological (real) vector space. By iteration, a mid-point convex function $f$ satisfies the inequality~\eqref{convexity} for any $\lambda \in \{k/2^n:n\in \N,k\in \{0,\dots, 2^n \}\}$. The latter set being dense in $[0,1]$, ~\eqref{convexity} can be extended to all $\lambda\in [0,1]$ by a standard approximation argument, provided that we know that $f$ is lower semicontinuous. To wrap up, a lower semicontinuous, mid-point convex function $f\colon X\to (-\infty.+\infty]$ is convex.
	
	\begin{prop}
		\label{convex}
		Let $G,S,\ell,\mu, (Y_n)_{n\geq 0}$ be as in Proposition~\ref{existence}. Then the rate function $I$, governing the LDP for the sequence of $\R$-valued random variables $\bigl(\frac{1}{n}\ell(Y_n)\bigr)_{n\geq 1}$, is convex.
	\end{prop}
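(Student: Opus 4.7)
The plan is to reduce convexity to mid-point convexity, since $I$ is already lower semicontinuous (Proposition~\ref{criterion} combined with Proposition~\ref{existence}), and the discussion preceding the proposition records that a lower semicontinuous, mid-point convex function is convex. Thus we fix $x_1,x_2\in \R_{\geq 0}$ with $I(x_j)<\infty$ (else the bound is vacuous), set $x=(x_1+x_2)/2$, and aim to prove that for every $\eta>0$ some open neighbourhood $V$ of $x$ satisfies $\liminf_n \frac{1}{n}\log{\mu_n(V)}\geq -\frac{1}{2}(I(x_1)+I(x_2))-\eta$; this gives $I(x)\leq (I(x_1)+I(x_2))/2+\eta$, and one lets $\eta\downarrow 0$.

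The strategy parallels Proposition~\ref{existence}, but now couples two independent pieces of the walk running at different asymptotic speeds. Fix a small $\rho>0$. Using the representation $I(y)=\sup_{y\in V\text{ open}}-\liminf_n \frac{1}{n}\log{\mu_n(V)}$, one has for $j=1,2$ and all sufficiently large $n$ the bound $\mu_n(B(x_j,\rho))\geq e^{-n(I(x_j)+\eta/4)}$. Setting $F_j^{(n)}=\{g\in G:\ell(g)\in nB(x_j,\rho)\}$ and applying Lemma~\ref{weaklengthadd} to each $F_j^{(n)}$, one extracts $A_j^{(n)}\subset F_j^{(n)}$ and $\bar g_j\in \mathcal{T}$ with $\P(Y_n\in A_j^{(n)})\geq (r\theta_{n(x_j+\rho)})^{-2D}\mu_n(B(x_j,\rho))$, such that $A_j^{(n)}$ or $A_j^{(n)}\bar g_j$ enjoys the weak length additivity property of order $LD$.

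The key step is to concatenate, via a fixed separator $h\in \mathcal{T}$, an independent copy of the walk landing in $A_1^{(n)}$ (possibly composed with $\bar g_1$) with a second independent copy landing in $A_2^{(n)}$ (possibly composed with $\bar g_2$). The pattern-avoidance hypothesis on $\Ga$ provides such an $h$ whose reduced word neither starts with a pattern matching the trailing letters of the first block nor ends with one matching the leading letters of the second, so that no large cancellation occurs at the two junctions. On the joint event, the reduced word representing $Y_{2n+t}$, with $t$ a bounded number of extra steps accounting for the separator and the $\bar g_j$, has length at least $\ell(Y_n)+\ell(Y'_n)-CLD$ for a combinatorial constant $C$; hence $\ell(Y_{2n+t})\in (2n+t)B(x,\rho')$ with $\rho'$ only slightly larger than $\rho$. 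Independence and stationarity of the increments then give
\begin{equation*}
\mu_{2n+t}(B(x,\rho'))\;\geq\; p^3\,(r\theta_{n(\max_j x_j+\rho)})^{-4D}\,e^{-n(I(x_1)+I(x_2))-n\eta/2},
\end{equation*}
where $p>0$ uniformly lower-bounds the probabilities $\P(Y_{t(g)}=g)$ for the finitely many $g\in \mathcal{T}$ used. Taking logarithms, dividing by $2n+t$, and using subexponential growth of the $G_i$ to absorb the $\theta$-factors, one obtains the required lower bound along the non-lacunary sequence $(2n+t)_n$; Lemma~\ref{lowerbound} upgrades this to a genuine $\liminf$ bound on a marginally enlarged neighbourhood of $x$.

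The main obstacle is the concatenation step: Lemma~\ref{weaklengthadd} produces weak length additivity only for products of elements drawn from a single set $A$, whereas convexity forces us to glue an element of $A_1^{(n)}$ to an element of $A_2^{(n)}$. Surmounting this requires a careful choice of the separator $h\in \mathcal{T}$ so that both junctions behave as if they were internal junctions within one of the sets of Lemma~\ref{weaklengthadd}; the pattern-avoidance property is exactly what guarantees the existence of such an $h$, regardless of the particular first/last letters exposed by the constructions producing $A_1^{(n)}$ and $A_2^{(n)}$. Once this is arranged the remainder of the argument is essentially a mirror image of Proposition~\ref{existence}.
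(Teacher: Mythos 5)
Your overall plan — reduce to mid-point convexity, couple two independent blocks of the walk landing near $nx_1$ and $nx_2$, glue them with a short separator from $\mathcal{T}$, and conclude via Lemma~\ref{lowerbound} along the non-lacunary sequence $\phi(n)\approx 2n$ — matches the paper's strategy, and you correctly flag the key obstacle: Lemma~\ref{weaklengthadd} only controls products drawn from a single set $A$, whereas convexity requires control of $g_1 g_2$ with $g_1\in A_1$, $g_2\in A_2$. The gap is in how you resolve that obstacle. You assert that pattern-avoidance yields a separator $h$ whose reduced word avoids cancellation with the trailing letters of $A_1$ and the leading letters of $A_2$, ``regardless of the particular first/last letters exposed.'' This overstates what pattern-avoidance gives. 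Pattern-avoidance of type size $D$ produces, for a \emph{single} reduced word $\omega$ of type size $D$, an element not starting with $\omega$ and not ending with $\omega^{-1}$. To control the junctions $g_1\,|\,h$ and $h\,|\,g_2$ you need $h$ not to start with $\sigma_1^{-1}$ (where $\sigma_1$ is the trailing pattern of $A_1$) and not to end with $\pi_2^{-1}$ (where $\pi_2$ is the leading pattern of $A_2$). These two constraints are simultaneously met by pattern-avoidance only when $\sigma_1^{-1}=\pi_2$. Applying Lemma~\ref{weaklengthadd} to $F_1$ and $F_2$ as black boxes gives no relationship between $\sigma_1$ and $\pi_2$; when they disagree, your appeal to pattern-avoidance does not produce the required $h$, and the mismatched case is not automatically harmless either, since the pinned-down boundary letters can have arbitrarily large length.

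The paper sidesteps this by adapting the \emph{proof} of Lemma~\ref{weaklengthadd} rather than its statement: it runs the pigeonhole once, jointly on the last $D$ letters of $F_1$-elements together with the first $D$ letters of $F_2$-elements. That joint pigeonhole only continues while the boundary letters cancel, which forces $\sigma_1=\pi_2^{-1}$ in the sole case where a separator is needed; pattern-avoidance then applies to the single pattern $\pi_2$, yielding subsets $A_i\subset F_i$ with a loss factor of $\theta^{-D}$ (not your $\theta^{-2D}$ per set) and the cross-product bound $\ell(g_1 g g_2)\geq \ell(g_1)+\ell(g_2)-2LD$. Replace your two separate applications of the lemma with this one joint pigeonhole and the rest of your argument — the probability estimate, division by $\phi(n)$, absorption of the $\theta$-factors via subexponential growth, and invocation of Lemma~\ref{lowerbound} — carries through.
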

	
	The proof bears a lot of resemblance with the proof of Proposition~\ref{existence}; for the sake of conciseness, we shall omit a few details.	
	\begin{proof}
		As observed in the previous paragraph, it suffices to show that $I$ is mid-point convex, since we already know $I$ that is lower semicontinuous. Again, we argue by contradiction: assume there exist $x_1<x_2\in \R$ such that
		\begin{equation}
		\label{nomidpoint}
		I\biggl(\frac{1}{2} x_1+\frac{1}{2}x_2\biggr)> \frac{1}{2} I(x_1)+\frac{1}{2}I(x_2)\;.
		\end{equation} 
		Recall that we have
		\begin{equation*}
		I(x)=\supl_{x\in V\text{open}}-\lb(V)=\supl_{x\in V\text{open}}-\ub(V) \quad \text{for all }x \in X;
		\end{equation*}
		therefore,~\eqref{nomidpoint} implies that there exist $\delta,\eta >0$ such that
		\begin{equation}
		\label{wrongineq}
		\begin{split}
		\ub\biggl(&B\biggl(\frac{1}{2}x_1+\frac{1}{2}x_2,\delta\biggr)\biggr)
		<\\
		&<\frac{1}{2}\biggl(\lb(B(x_1,\rho_1))+\lb(B(x_2,\rho_2))\biggr)-\eta
		\end{split}
		\end{equation}
		for any $\rho_1,\rho_2>0$. 
		Notice that this forces in particular $x_1,x_2\in \R_{\geq 0}$. Choose $\rho\coloneqq \rho_1=\rho_2<\delta$. For a sufficiently large $n_0$ and every $n\geq n_0$, we claim that there exists $\phi(n)\in\{2n,\dots,2n+t \}$ such that 
		\begin{equation}
		\label{rightineq}
		\frac{1}{\phi(n)}\log{\mu_{\phi(n)}\biggl(B\biggl(\frac{1}{2}x_1+\frac{1}{2}x_2,\delta\biggr)\biggr)}\geq \frac{1}{2}\biggl(\frac{1}{n}\log{\mu_n(B(x_1,\rho))}+\frac{1}{n}\log{\mu_n(B(x_2,\rho))}\biggr)-\eta\;.
		\end{equation}
		Letting $n$ vary over an arithmetic progression for which the corresponding sequence of $\phi(n)$ is strictly increasing, it is clear that we obtain a contradiction to~\eqref{wrongineq}.
		
		It remains to prove the claim just stated. Let $\mathcal{T}\subset \Ga\setminus\{e\}$ be a finite set avoiding patterns of type size $D$, and fix $n\geq n_0$; let $F_i=\{g\in G:\ell(g)\in nB(x_i,\rho) \}$, $i=1,2$. Adapting the proof of Lemma~\ref{weaklengthadd} appropriately\footnote{There is a minor nuisance here if $x_1=0$, as $F_1$ contains the identity; replacing $F_1$ with $F_1\setminus\{e\}$ results in harmless modifications of the probabilities involved.}, we deduce that there is an element $g\in \mathcal{T}$ and subsets $A_i\subset F_i$ such that $\P(Y_n\in A_i)\geq (r\theta_{n(x+\rho_i)})^{-D}\P(Y_{n}\in F_i)$ and
		\begin{itemize}
			\item[--] either for any $g_1\in A_1,g_2\in A_2$ it holds $\ell(g_1g_2)\geq \ell(g_1)+\ell(g_2)-2LD$,
			\item[--] or for any $g_1\in A_1,g_2\in A_2$, $\ell(g_1gg_2)\geq\ell(g_1)+\ell(g_2)-2LD$.
		\end{itemize}
		In the first case, we get the inequality~\eqref{rightineq} for $\phi(n)=2n$, by observing that $g_1\in A_1,g_2\in A_2$ imply $\ell(g_1g_2)\in 2nB((x_1+x_2)/2,\delta)$; in the second case, we get it for $\phi(n)=2n+t(g)$. We refer to the proof of Proposition~\ref{existence} for the missing details.
	\end{proof}
	
	\subsection{Further properties of the rate function}
	\label{further}
	
	We list below some additional properties of the rate function, emphasizing connections with other relevant quantities associated to the random walk, such as the rate of escape and the spectral radius.
	
	\begin{enumerate}
		\item Since $\frac{1}{n}\ell(Y_n)$ converges to the escape rate $\lambda$ almost surely, $I$ has a zero at $x=\lambda$. 
		\item Convexity of the rate function $I$ gives, as an immediate corollary, that its effective domain $D_I$ is a convex subset of $\R_{\geq 0}$, hence a (possibly degenerate\footnote{In general, the rate function $I$ can be as degenerate as possible: for instance, if $G=\langle a,b \rangle$ is a free group on two generators, and $\mu(a)=p=1-\mu(b)$ for some $p \in [0,1]$, then $I(1)=0$ and $I(x)=\infty$ for any $x\in \R_{\geq 0}\setminus\{1\}$, as $\ell(Y_n)=n\;$ $\P$-almost surely for every $n$.}) sub-interval of the positive half-line. 
		Standard properties of convex functions defined on sub-intervals of the real line imply that, on the open interval $D_{I}^{\circ}$, the rate function $I$ is continuous, admits left and right derivatives at every point, and it is differentiable outside a countable set of points. In particular, continuity on $D_I^{\circ}$ gives that 
		\begin{equation*}
		\liml_{n\to\infty}\frac{1}{n}\log{\mu_n}(V)=-\inf_{x\in V}I(x) \;\text{ for every open set } V\subset D_{I}^{\circ}\;;
		\end{equation*}
		in other words, the exponential decay rate of the sequence $(\mu_{n}(V))_{n\geq 1}$ is well-defined whenever $V$ is an open subset of $D_I^{\circ}$.
		\item Define the \emph{spectral radius} of the random walk as 
		\begin{equation*}
		\rho=\limsup\limits_{n\to\infty}\P(Y_n=e)^{\frac{1}{n}} \in [0,1]\;.
		\end{equation*} 
		If the measure $\mu$ is symmetric, that is $\mu(g)=\mu(g^{-1})$ for every $g \in G$, this quantity coincides with the spectral radius of the \emph{Markov operator} associated with the random walk (cf.~\cite[Chap.~6]{Lyons-Peres}). 
		For every $\delta>0$, we have
		\begin{equation*}
		\mu_{n}(B(0,\delta))=\mu_n([0,\delta))\geq\mu_{n}(0)=\P(\ell(Y_n)=0)=\P(Y_n=e),
		\end{equation*}
		which implies
		\begin{equation*}
		\ub(B(0,\delta))\geq \limsup\limits_{n\to\infty}\frac{1}{n}\log{\P(Y_n=e)}=\log{\rho}\;,
		\end{equation*}
		with the understanding that $\log{\rho}=-\infty$ if $\rho=0$.
		The previous inequality holding for every $\delta>0$, we infer that
		\begin{equation}
		\label{ineq}
		I(0)=\supl_{\delta >0}-\ub(B(0,\delta))\leq -\log{\rho}\;.
		\end{equation}
		As a consequence, we deduce that $0\in D_I$ provided that the spectral radius is strictly positive. This occurs, for instance, whenever the semigroup $\Ga$ generated by $\supp{\mu}$ contains $e$: if $n_0\in \N$ is any integer for which $\P(Y_{n_0}=e)>0$, then
		\begin{equation*} \rho\geq\limsup_{k\to\infty}\P(Y_{kn_0}=e)^{\frac{1}{kn_0}}\geq \limsup_{k\to\infty}\bigl(\P(Y_{n_0}=e)^{k}\bigr)^{\frac{1}{kn_0}}>0. 
		\end{equation*}
		It is worth mentioning that equality $I(0)=-\log{\rho}$ actually holds\footnote{We thank S.~M\"{u}ller for communicating this fact.}, whenever the LDP for the word length functional is verified and the measure $\mu$ driving the random walk satisfies $\inf\{\mu(g):g\in \supp{\mu} \}>0$  (see~\cite[Lem.~2.8]{Mueller}). 
		\item As far as the least upper bound of $D_I$ is concerned, assume that the support of $\mu$ is bounded, and let $L=\sup\{\ell(g):g\in \supp{\mu}\}<\infty$. Then $I\equiv \infty$ on the open half-line $(L,\infty)$, as subadditivity of $\ell$ implies $\ell(Y_n)\leq nL$ $\P$-almost surely for any $n\geq 1$. Therefore, in this case, $D_I$ is contained in $[0,L]$.
		
		If no restriction is placed on the size of $\supp{\mu}$, then $\sup{D_I}$ may be infinite\footnote{Consider, once again, $G=\langle a,b\rangle$ a free group on two generators, and choose a measure $\mu$ with $\supp{\mu}=\langle a \rangle$. Then $\P(\ell(Y_n)=nk)\geq (\mu(a^{k}))^{n}$ for all integers $n,k\geq 1$, so that $I(k)<\infty$ for any $k\geq 1$. In this example, we have thus $D_I=\R_{\geq 0}$.}. 
	\end{enumerate}

	\subsection{The rate function as a Fenchel-Legendre transform}
	\label{FL}
	It remains to prove the final statement of Theorem~\ref{main}, under the assumption that $\mu$ has finite moment-generating function. By virtue of Theorem~\ref{momentgen}, it suffices to prove that the limiting logarithmic moment generating function of the sequence $(\mu_n)_{n\geq 1}$, given by
	\begin{equation*}
	\Lambda(z)=\limsupl_{n\to\infty}\frac{1}{n}\log{\int_{\R}e^{nz\cdot x}d\mu_{n}(x)}=\limsupl_{n\to\infty}\frac{1}{n}\log{\E[e^{ z\cdot\ell(Y_n)}]}  \;,\; z \in \R,
	\end{equation*}
	is finite everywhere, where we have canonically identified $\R$ with its dual space, and the dual pairing with the standard product of real numbers.  
	
	Fix $z\in \R_{\geq 0}\;$; then $\E[e^{z\cdot \ell(Y_1)}]=\int_{G}\exp{(z\ell(g))}\;d\mu(g)<\infty$, since all exponential moments of $\mu$ are finite. Moreover, for any $n,m\geq 1$, we have
	\begin{equation*}
	\E[e^{z\cdot \ell(Y_{n+m})}]\leq \E[e^{z\cdot \ell(X_1\cdots X_n)}e^{z\cdot \ell(X_{n+1}\cdots X_{n+m})}]=\E[e^{z\cdot \ell(Y_n)}]\E[e^{z\cdot \ell(Y_m)}]\;;
	\end{equation*}
	the first inequality comes from subadditivity  of the length function $\ell$, whereas the second follows from independence and stationarity of the process $(X_n)_{n\geq 1}$. Therefore, the sequence
	\begin{equation}
	\label{Fekete}
	a_n=\log{\E[e^{z\cdot \ell(Y_n)}]}\;,\;n\geq 1,
	\end{equation} 
	is subadditive, that is $a_{n+m}\leq a_n+a_m$ for every $n,m\geq 1$; Fekete's lemma (\cite[Ex.~3.9]{Lyons-Peres}) gives
	\begin{equation*}
	\Lambda(z)=\liml_{n\to\infty}\frac{1}{n}\log{\E[e^{ z\cdot\ell(Y_n)\rangle}]}=\infl_{n\geq 1}\frac{1}{n}\log{\E[e^{ z\cdot\ell(Y_n)\rangle}]}\leq \E[e^{z\cdot \ell(Y_1)}]<\infty\;.
	\end{equation*}
	If $z\in \R_{<0}\;$, a similar argument shows that the sequence~\eqref{Fekete} is superadditive, and $\La(z)<\infty$ follows all the same.

	\section{Concluding remarks and open questions}
	\label{final}
	
	\subsection{Groups with strongly connected finite-state automata}
	We mention another class of examples to which our method would apply: finitely generated groups whose cone type automaton with respect to a given generating set is finite and strongly connected.
	
	Let $G$ be a finitely generated group, $S$ a finite set of generators, $\ell$ the word length defined by $S$ on $G$. For every element $g\in G$, we define the \emph{cone type} of $g$ as the set 
	\begin{equation*}
	C(g)=\{h \in G:\ell(gh)=\ell(g)+\ell(h)  \}.
	\end{equation*} 
	Notice that the usual definition of cone type which appears in the literature (\cite{Bridson-Haefliger,Epstein,Ohshika}) involves geodesic words in the alphabet $S$, rather that actual group elements of $G$; our definition is more convenient for the purposes of this discussion.

	The cone type of an element selects those geodesic segments that can be attached (in algebraic terms, multiplied) to it on the right so that the concatenation is again a geodesic segment. Observe that it is precisely this notion that, implicitly, comes into play both in the proof of existence of LDP and in the proof of convexity of the rate function. 
	
	Cone types offer an algorithmic way to label geodesics in the group $G$, in other words to identify those strings $(s_1,\dots,s_n)$ of letters in the alphabet $S$ such that $\ell(s_1\cdots s_n)=n$. This is achieved through the construction of a finite state automaton (cf.~\cite{Epstein}), called the \emph{cone type automaton} of $G$ with respect to the language given by $S$. Assume there are only finitely many cone types $C_0=C(e),C_1,\dots,C_s$, which we view as vertices of a directed graph $\Delta$ whose edges are labelled by elements of $S$; more precisely, we connect the cone type $C(g)$ of an element $g$ to the cone type of $C(gs)$, via a directed edge labelled by $s \in S$, if and only if $s \in C(g)$. It is immediate that the definition doesn't depend on the choice of $g$ but only on its cone type. If $e\notin S$, there is a one-to-one correspondence between edge-paths in the directed graph $\Delta$ starting at $C_0$ and finite sequences $(s_1,\dots,s_n)\in S^n$ such that $\ell(s_1\cdots s_n)=n$, that is geodesic words in the alphabet $S$. 
	
	Now, the conditions we need to impose in order for the arguments of Sections~\ref{exist} and~\ref{conv} to carry over unaffectedly are:
	\begin{enumerate}
		\item the finite directed graph $\Delta$ is strongly connected, meaning that there is a directed path joining any two of its vertices;
		\item every element of $G$ belongs to the cone type of some non-trivial element; otherwise stated, for any geodesic word $\omega=(s_1,\dots,s_n)$ in the alphabet $S$, there is a cone type $C\neq C_0$ from which we can follow a directed path in the graph $\Delta$ according to the labelling given by $\omega$. 
	\end{enumerate}
	
	
	\begin{ex}[Simple random walks on integer lattices]
		Consider $G=\Z^d$ with its standard symmetric set of generators $S=\{\pm e_i:1\leq i\leq d \}$. Any probability distribution $\mu$ with $\supp{\mu}\subset S$ gives rise to a simple random walk $(Y_n)_{n\in \N}$ on $\Z^d$. It is clear that there are exactly $2^d+2d+1$ different cone types (the $2^d$ quadrants, the $2d$ half-spaces delimited by the $d$ coordinate planes, and the whole $\Z^d$). It takes a moment to realize that both conditions stated above are met. We thus recover, by elementary means, existence of the LDP with convex rate function for the process $\frac{1}{n}\norm{Y_n}_1$ (where $\norm{(x_1,\dots,x_d)}_1=|x_1|+\cdots |x_d|$ for any $(x_1,\dots,x_d)\in \R^d$), which is usually seen as a straightforward consequence of Cramer's theorem for the empirical mean of i.i.d.~random vectors (see~\cite[Thm.~2.2.30]{De-Ze}).
	\end{ex}
	
	Finiteness of the number of cone types appears to be an intrinsic requirement when attempting to establish the LDP via the strategy presented here, while the two additional conditions on the cone type automaton mentioned above can be presumably lifted through a refinement of the method. 
	
	A large class of finitely generated groups having only finitely many cone types, with respect to any finite generating set, is given by Gromov-hyperbolic groups; indeed, in such groups the cone type of an element only depends on its $k$-\emph{tail}, for a fixed positive integer $k$ depending only on the group (see~\cite{Bridson-Haefliger}). Our considerations thus provide substance to the claim that Theorem~\ref{main} holds for any Gromov-hyperbolic group\footnote{(Added in revision) Gou\"{e}zel has shown (\cite[Lem.~2.4]{Gouezel}) that a non-elementary hyperbolic group $G$ equipped with a word length $\ell$ satisfies the following geometric property: there exist constants $c,C>0$ such that, for any $x,y\in G$, there is an element $a\in G$ of length at most $C$ such that $\ell(xay)\geq \ell(x)+\ell(y)-c$. The result has been subsequently extended in~\cite[Lem.~5.3]{Dussaule} to relatively hyperbolic groups. It can be used as a replacement of almost length additivity throughout the proof of Theorem~\ref{main}, thereby proving its validity for irreducible random walks on any relatively hyperbolic group, with respect to any word length. The resulting argument simplifies the proof of~\cite[Thm.~1.2]{Boulanger-Mathieu-Sert-Sisto}, which however addresses more general spaces and walks, and yields a finer result on the rate function.}.

	
	\subsection{Some open problems}
	
	Computing the exact expression of the rate function, in the cases treated by Theorem~\ref{main}, is mostly out of reach; however, it is worth carrying through the computation in the easiest case of symmetric simple random walks on free groups, to get a flavour of what should happen in more general circumstances. This has already been performed in~\cite{Cagri-thesis}: let $G$ be a free group on $r\geq 1$ generators, $S=\{a_1,\dots,a_r\}$ a free generating set, $\mu$ the uniform probability measure on $S\cup S^{-1}$, i.e.~$\mu(a_{i})=\mu(a_i^{-1})=(2r)^{-1}$ for any $i \in \{1,\dots,r\}$. The rate function governing the LDP for the sequence $\bigl(\frac{1}{n}\ell(Y_n)\bigr)_{n\geq 1}$ is given by the following expression:
	\begin{equation*}
	I(x)=
	\begin{cases}
	\frac{1+x}{2}\log{(1+x)}+\frac{1-x}{2}\log{(1-x)}+\log{r}-\frac{1+x}{2}\log{(2r-1)} &\text{ if }x\in [0,1],\\
	\infty & \text{ otherwise },
	\end{cases}
	\end{equation*}
	where we agree that $0\log{0}=0$. The function $I$ is analytic in $(0,1)$ and strictly convex in its effective domain $[0,1]$, and hence admits a unique zero at $\lambda=1-1/r$, corresponding to the escape rate of the random walk; as a consequence thereof, the probability $\P\bigl(|\frac{1}{n}\ell(Y_n)-\lambda|\geq \eps\bigr)$ that the renormalized length deviates largely from the escape rate decays exponentially fast with $n$ for any $\eps>0$. Furthermore, the value of $I$ at $0$ is equal (in absolute value) to the logarithm of the spectral radius, as expected. Lastly, we notice that the right derivative $I'(0)$ at $0$ is finite, while the left derivative $I'(1)$ at $1$ is infinite.  
	
	\smallskip
	This motivates the following questions:
	
	\begin{enumerate}
		\item Is the rate function $I$ in Theorem~\ref{main} always strictly convex? In particular, does it always have a unique zero at $x=\lambda$?
		\item What are the finer regularity properties of the rate function? What is the behaviour of the (one-sided) derivatives of $I$ at the extreme points of its effective domain? 
	\end{enumerate}
	
	Assuming the validity of Theorem~\ref{main} for Gromov-hyperbolic groups, the same questions can obviously be phrased in this broader context as well.





	

\footnotesize

\end{document}